\newtheorem{ex}{EXAMPLE}
\newtheorem{thm}{THEOREM}
\newtheorem{lem}{LEMMA}
\newtheorem{cor}{COROLLARY}
\newtheorem{rem}{REMARK}
\newtheorem{definition}{Definition}
\author{Maciej Ciesielski and Grzegorz Lewicki}
\title
{Some remarks on contractive and existence sets } 
\begin{document}
	
\maketitle

\begin{abstract}
Let $X$ be a real or complex Banach space and let $ F \subset X $ be a non-empty set.
$F$ is called {\it an existence set of best
coapproximation} ({\it existence set} for brevity), if for any $ x
\in X $, $R_F(x) \neq \emptyset, $ where
$$
R_F (x) = \{ d \in F : \|d-c\| \leq \|x-c\| \hbox{ for any } c \in
F \} .
$$
It is clear that any existence set is a contractive subset of $X.$ The aim of this paper is to present some conditions on $F$ and $X$ under which the notions of exsistence set and contractive set are equivalent. 
\end{abstract}

\vskip0.5cm 
\noindent \underline{AMS Classification}: 47B37, 46E30, 47H09.\\
{ \underline{Key Words and Phrases:}\hspace{0.15in} Banach spaces, reflexivity, strict convexity, contractive and existence sets, one complemented spaces.}

\section{Introduction} \vskip0.3cm \noindent
Let $ X $ be a real or complex Banach space and let $ F \subset X $ be a non-empty
set. A continuous mapping $ P: X \rightarrow F $ is called {\it a
projection onto $F,$} whenever $ P |_F = Id, $ that is $ P^2 = P.
$ Setting
$$
Min (F)= \{ z \in X: \hbox{ for every } c \in F, x \in X, \hbox{
if }\|z-c\|  \geq \| x-c\| \hbox{ then } x = z \} ,
$$
we say that $ F \subset X $ is {\it optimal} if $ Min(F) = F. $
Observe that for any $ F\subset X, $ $ F \subset Min(F) . $ This
notion has been introduced by Beauzamy and Maurey  in \cite{BM}, where
basic properties concerning optimal sets can be found.
\par
A set $ F \subset X $ is called {\it an existence set of best
coapproximation} ({\it existence set} for brevity), if for any $ x
\in X $, $R_F(x) \neq \emptyset, $ where
$$
R_F (x) = \{ d \in F : \|d-c\| \leq \|x-c\| \hbox{ for any } c \in
F \} .
$$
Notice that any contractive set is an existence set. Indeed, if $ P:X \rightarrow F$ is a contractive projection, then $Px \in R_F(x)$ for any $x \in X.$
Also it is clear that any existence set is an optimal set. The
converse, in general, is not true. However, the following result is satisfied.
\begin{thm} 
\label{optimal}
(\cite{BM}, Prop. 2)
If $X$ is one-complemented in $ X^{**}$ and strictly convex, then any
optimal subset of $X$ is an existence set in $X,$ which, in
particular, holds true for strictly convex spaces $X,$ such that $
X = Z^*$ for some Banach space $Z.$
\end{thm}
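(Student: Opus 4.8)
The plan is to recast the statement in terms of the sets of coapproximations and then run a Zorn-type minimality argument, using the hypothesis that $X$ is one-complemented in $X^{**}$ only to supply compactness, and strict convexity only at the very last step. For $x\in X$ write
$$
C_F(x)=\{\,y\in X:\|y-c\|\le\|x-c\|\ \text{for all }c\in F\,\},
$$
so that $R_F(x)=C_F(x)\cap F$; note also that, in the notation of the excerpt, $z\in Min(F)$ if and only if $C_F(z)=\{z\}$. Hence, since $F$ is optimal ($Min(F)=F$), it suffices to prove: for every $x\in X$ there exists $d\in C_F(x)$ with $C_F(d)=\{d\}$. I would obtain such a $d$ as a minimal element of $C_F(x)$ for the preorder $d_1\preceq d_2\iff\|d_1-c\|\le\|d_2-c\|$ for all $c\in F$ (equivalently $C_F(d_1)\subseteq C_F(d_2)$). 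Observe that $C_F(x)$ is nonempty (it contains $x$) and, fixing any $c_0\in F$, bounded ($C_F(x)\subseteq\bar B(c_0,\|x-c_0\|)$).

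The delicate point is the existence of a $\preceq$-minimal element: $C_F(x)$ need not be compact in $X$, so Zorn's lemma cannot be applied directly. I would pass to the bidual. For $y\in X^{**}$ set $\widetilde C_F(y)=\{\,w\in X^{**}:\|w-c\|\le\|y-c\|\ \text{for all }c\in F\,\}$; each such set is an intersection of closed balls of the dual space $X^{**}=(X^*)^*$, hence weak-$*$ compact. Given a chain $\{d_\alpha\}$ in $(C_F(x),\preceq)$, the family $\{\widetilde C_F(d_\alpha)\}$ is totally ordered by inclusion and consists of nonempty weak-$*$ compact sets, so by the finite intersection property it has a point $w_0$ in its intersection; if $P\colon X^{**}\to X$ is a norm-one projection (which exists since $X$ is one-complemented in $X^{**}$) and $d_0=Pw_0$, then for every $c\in F$ one has $\|d_0-c\|=\|P(w_0-c)\|\le\|w_0-c\|\le\|d_\alpha-c\|$, so $d_0\in C_F(x)$ is a lower bound of the chain. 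Zorn's lemma then furnishes a $\preceq$-minimal $d\in C_F(x)$; by minimality, any $d'\in X$ with $\|d'-c\|\le\|d-c\|$ for all $c\in F$ (such a $d'$ automatically lies in $C_F(x)$) satisfies $\|d'-c\|=\|d-c\|$ for all $c\in F$.

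It remains to upgrade minimality of $d$ to $C_F(d)=\{d\}$, and this is where strict convexity enters. If $d'\in C_F(d)$, the previous line gives $\|d'-c\|=\|d-c\|$ for all $c\in F$; the midpoint $m=\tfrac12(d+d')$ again lies in $C_F(d)\subseteq C_F(x)$, so $\|m-c\|=\|d-c\|$ for all $c\in F$ as well. Picking any $c\in F$: if $\|d-c\|=0$ then $d=c=d'$, while if $\|d-c\|>0$ the equality $\bigl\|\tfrac12\bigl((d-c)+(d'-c)\bigr)\bigr\|=\|d-c\|=\|d'-c\|$ together with strict convexity of $X$ forces $d-c=d'-c$, i.e. $d=d'$. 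Thus $C_F(d)=\{d\}$, so $d\in Min(F)=F$; since also $d\in C_F(x)$, we get $d\in R_F(x)$, proving that $F$ is an existence set.

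For the concluding assertion, note that a dual space is always one-complemented in its bidual: if $X=Z^*$ then $X^{**}=(Z^{**})^*$, and the adjoint of the canonical embedding $Z\hookrightarrow Z^{**}$ is a norm-one projection of $X^{**}$ onto $X$; so a strictly convex $X=Z^*$ satisfies the hypotheses of the first part. The main obstacle throughout is the non-compactness of $C_F(x)$ in $X$, which the passage to $X^{**}$ together with the contractive projection is designed to circumvent; the strict convexity step is then routine.
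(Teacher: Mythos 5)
Your argument is correct. Note first that the paper does not prove this theorem at all: it is imported verbatim from Beauzamy--Maurey (\cite{BM}, Prop.~2), so there is no in-paper proof to compare against; what you have produced is essentially a reconstruction of the Beauzamy--Maurey argument. Your reduction is the right one: reading the paper's (ambiguously typeset) definition of $Min(F)$ as ``$C_F(z)=\{z\}$'' is the intended Beauzamy--Maurey meaning (the literal reading would make $F\subset Min(F)$ false), and then optimality reduces the problem to producing a $\preceq$-minimal point of $C_F(x)$. The passage to $X^{**}$ is exactly where the hypothesis of one-complementedness is needed: the sets $\widetilde C_F(d_\alpha)$ are intersections of weak-$*$ compact balls, the finite intersection property gives a point $w_0$ in the intersection along a chain, and $P w_0$ pulls it back into $X$ as a lower bound because $P c=c$ for $c\in F\subset X$; the strict-convexity step correctly upgrades ``minimal'' to ``$C_F(d)=\{d\}$'' via the midpoint trick, and the final observation that a dual space is norm-one complemented in its bidual via $(j_Z)^*$ is standard. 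Two points worth making explicit if you write this up: (i) $\preceq$ is only a preorder, so you should either quotient by the induced equivalence before invoking Zorn or state the preorder version of Zorn's lemma (minimality then means $d'\preceq d\Rightarrow d\preceq d'$, which is exactly what you use); (ii) your argument is ``pointwise'' (minimal elements of $C_F(x)$), whereas the machinery the paper actually builds in Section~1 (Lemmas \ref{compact} and \ref{maximal}, Theorem \ref{existence1}, following Bruck) runs the same Zorn-plus-compactness scheme at the level of nonexpansive self-maps $N(F)$ rather than points; your route is more elementary for this particular statement, while the paper's map-level version is what generalizes to the directed-family results later on.
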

Existence and optimal sets have been studied by many authors from
different points of view, mainly in the context of approximation
theory (see e.g. \cite{B}, \cite{BE},  [\cite{DE} - \cite{H}], \cite{KL}, \cite{KL1}, \cite{L}, \cite{LTr}, [\cite{PS} - \cite{W}]).
\newline
Recall that a closed subspace $F$ of a Banach space $X$ is called
{\it one-complemented} if there exists a linear projection of norm
one from $X$ onto $F.$  It is obvious that any
one-complemented subspace is an existence set. The converse, in
general, is not true. By a deep result of Lindenstrauss, \cite{L}
there exists a Banach space $X$ and $F$ a linear subspace of $X,$
codim$(F) = 2,$ such that:

a).  $F$ is one-complemented in any containing it hyperplane $Y$
of $X;$

b). $F$ is not one-complemented in $X.$ 
\newline 
This gives you
immediately an example of a subspace being an existence set which
is not one-complemented. However, in \cite{BM} (see also \cite{LTr} p.121),
the the following result has been
proven.
\begin{thm} (see \cite{BM}, Prop. 5). 
\label{smooth1} Let $V$ be a
linear subspace of a smooth, reflexive and strictly convex Banach
space. If $V$ is an optimal set then $V$ is one-complemented in
$X.$ If $X$ is a smooth Banach space, then any subspace of $X$
which is an existence set is one-complemented. Moreover, in both
cases a norm-one projection from $X$ onto $V$ is uniquely
determined.
\end{thm}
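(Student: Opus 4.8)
The plan is to establish the smooth assertion first, to derive the optimal-set assertion from it together with Theorem~\ref{optimal}, and to read off the uniqueness clause at the end. Assume throughout that $X$ is smooth and let $J\colon X\setminus\{0\}\to X^{*}$ be its (single-valued) support mapping, so that $J(v)$ is the unique functional with $\|J(v)\|=1$ and $J(v)(v)=\|v\|$.

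The main step is a differential description of the coapproximation set: for $x\in X$ and $d\in V$,
\[
d\in R_{V}(x)\qquad\Longleftrightarrow\qquad J(v)(x-d)=0\ \text{ for all }v\in V\setminus\{0\}.
\]
The implication from right to left is immediate: given $c\in V$ with $c\neq d$, put $v=d-c\in V\setminus\{0\}$; then $J(v)(x-c)=J(v)(x-d)+J(v)(v)=\|v\|$, so $\|x-c\|\ge|J(v)(x-c)|=\|v\|=\|d-c\|$, and $c=d$ is trivial. For the converse, from $d\in R_{V}(x)$ we get $t\|v\|=\|tv\|=\|d-(d-tv)\|\le\|x-(d-tv)\|=\|(x-d)+tv\|$ for every $t>0$ and $v\in V$, hence $\|v\|\le\|v+t^{-1}(x-d)\|$; since smoothness means the norm is Gâteaux differentiable off $0$, with derivative at $v\neq0$ in a direction $h$ equal to $\operatorname{Re}J(v)(h)$, letting $t\to\infty$ yields $\operatorname{Re}J(v)(x-d)\ge0$. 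Applying this with $-v$ in place of $v$ forces $\operatorname{Re}J(v)(x-d)=0$, and in the complex case applying it with $iv$ (using $J(iv)=\bar{i}J(v)$) kills the imaginary part too, so $J(v)(x-d)=0$. This is the step I expect to be the real obstacle; everything after it is bookkeeping.

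Now put $W=\bigcap_{v\in V\setminus\{0\}}\ker J(v)$, a closed subspace of $X$. The displayed equivalence reads $R_{V}(x)=V\cap(x+W)$. If $0\neq w\in V\cap W$ then $J(w)(w)=\|w\|>0$ contradicts $w\in W$, so $V\cap W=\{0\}$, whence every $R_{V}(x)$ has at most one point; and if $V$ is an existence set, then $R_{V}(x)\neq\emptyset$ for all $x$ gives $X=V\oplus W$. Define $P\colon X\to V$ by letting $Px$ be the unique element of $R_{V}(x)$. Then $R_{V}(\lambda x)=\lambda R_{V}(x)$ makes $P$ homogeneous, while for all $v\in V$ one has $J(v)\big((x-Px)+(y-Py)\big)=J(v)(x-Px)+J(v)(y-Py)=0$, so by the right-to-left implication $Px+Py\in R_{V}(x+y)$, and uniqueness gives $P(x+y)=Px+Py$; thus $P$ is linear. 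Taking $c=0$ in the definition of $R_{V}$ gives $\|Px\|\le\|x\|$, and $P|_{V}=\mathrm{id}$, so $P$ is a norm-one projection onto $V$ (in particular $V=PX$ is closed). This proves the smooth assertion. For the optimal assertion, reflexivity identifies $X$ with $X^{**}$ via the canonical embedding, so $X$ is trivially one-complemented in $X^{**}$; combined with strict convexity, Theorem~\ref{optimal} shows that an optimal subspace of $X$ is an existence set, and the smooth case just proved (which used only smoothness) finishes it.

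It remains to see that the norm-one projection is unique. If $Q\colon X\to V$ is any linear projection with $\|Q\|=1$, then for $c\in V$ we have $\|Qx-c\|=\|Q(x-c)\|\le\|x-c\|$, so $Qx\in R_{V}(x)$; since $X$ is smooth in both situations, $R_{V}(x)$ is a singleton, forcing $Qx=Px$, that is, $Q=P$. This gives the uniqueness clause in both cases.
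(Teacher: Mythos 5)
Your argument is correct. The paper does not actually prove this statement---it is quoted from Beauzamy--Maurey (\cite{BM}, Prop.~5)---but your route via the smooth-space characterization $d\in R_V(x)\iff J(v)(x-d)=0$ for all $v\in V\setminus\{0\}$ (Birkhoff--James orthogonality read off from the one-sided G\^ateaux derivative of the norm) is exactly the standard proof of that result, and the subsequent bookkeeping (the decomposition $X=V\oplus W$, homogeneity and additivity of $P$, $\|P\|=1$ from $c=0$, and uniqueness because any norm-one projection lands in the singleton $R_V(x)$) is all sound, including the reduction of the optimal case to the existence case via Theorem~\ref{optimal} and reflexivity. The only cosmetic remark is that the paper works with real scalars until Section 3, so the complex detour with $J(iv)=\bar{i}\,J(v)$ is harmless but not needed here.
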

The aim of this paper is to present some conditions on a Banach space $X$ and a convex and closed set $F \subset X$  under which the notions of existence set and contractive set are equivalent. In other words, we will study the problem of existence of 
a non-expansive selection $P:X \rightarrow F$ such that $Px \in R_F(x)$ for any $x \in X.$
\newline
In Section 1 some preliminary results will be demonstrated.
\newline
The main results of the paper will be presented in Section 2.
In Section 2 we assume that $F$ has a nonempty interior in $X.$ 
\newline
In Section 3 we consider the general case. 
\newline
All the results will be demonstrated for real Banach spaces. However, at the end of Section 3, we show how to apply the results obtained in the real case
to the case of complex Banach spaces (see Lemma \ref{complex1}, Lemma \ref{complex2} and Theorem \ref{complex3}). 
\newline
Now we present some notions  which will be used in this
paper. 
\newline 
In the sequel by $ S(X)$ we denote the unit sphere
in a Banach space X and by $ S(X^*)$ the unit sphere in its dual
space. A functional $f \in S(X^*)$ is called a {\it supporting
functional} for $ x \in X,$ if $ f(x)=\| x\|.$ Analogously, a
point $x \in S(X)$ is called a {\it norming point} for $ f \in
X^*$ if $ f(x) = \| f \| . $ 
\newline 
A point $x \in X$ is called
a {\it smooth point} if it has exactly one supporting functional.
A Banach space $X$ is called {\it smooth} if any $x \in S(X)$ is a
smooth point. 
\newline 
By $ext(X)$ we denote the set of all
extreme points of $S(X).$ A Banach space $X$ is called {\it
strictly convex} if $ ext(X) = S(X).$ 
\newline 
If $ F $ is a
linear subspace of a Banach space $X,$ by $ {\cal P}(X,F)$ we will
denote the set of all linear, continuous projections from $X$ onto
$F.$
If $X$ is a Banach space and $ Z \subset X$ is an affine subspace of $X,$ for $F \subset Z$ by $int_Z(F)$ ($int(F)$ if $Z=X$), we denote the interior
of $F$ with respect to $Z.$ 
\newline
To the end of this paper, unless otherwise stated, all Banach spaces are real.
Also we will need 
\begin{definition}
\label{fixed}
It is said that a Banach space $X$ has (CFPP) if and only if for any nonempty, convex, closed and bounded set $ F \subset X$ and any nonexpansive mapping $ T:X \rightarrow X$ such that $ T(F) \subset F,$ $T$ has a fixed point in $F.$  
\end{definition}
\vskip0.5cm  
\section*{Section 1}  
\vskip0.5cm
\noindent We start with a lemma presenting basic properties of existence sets.
\begin{lem}
\label{basicp}
Let $X$ be a Banach space and let $ F \subset X$ be an existence set. Then for any $ d \in X,$ $F+d$ is an existence set. Also, for any $ t \in \mathbb{R},$ $ t F $ is an existence set. If $F$ is a linear subspace then $F$ is an existence 
set if and only if for any $ x \in X \setminus F$ there exists $ P_x \in {\cal P}(F_x, F),$ $ \|P_x\|=1,$ where $ F_x = span[x] \oplus F.$   
\end{lem}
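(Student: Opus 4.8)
The first two assertions are elementary translation/scaling arguments. For $F+d$: given $x \in X$, write $x = y + d$ with $y \in X$. Since $F$ is an existence set, pick $e \in R_F(y)$, so $\|e - c\| \le \|y - c\|$ for all $c \in F$. I claim $e + d \in R_{F+d}(x)$: for an arbitrary element of $F+d$, write it as $c + d$ with $c \in F$; then $\|(e+d) - (c+d)\| = \|e - c\| \le \|y - c\| = \|x - (c+d)\|$. For $tF$ with $t \ne 0$: given $x$, set $y = x/t$, pick $e \in R_F(y)$, and check $te \in R_{tF}(x)$ by the same computation scaled by $|t|$. The case $t = 0$ gives $tF = \{0\}$, which is trivially an existence set. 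These are one-line verifications; I would state them compactly.

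The substantive part is the characterization for linear subspaces. \textbf{($\Leftarrow$)} Suppose that for each $x \in X \setminus F$ there is a norm-one projection $P_x$ of $F_x = \spn[x] \oplus F$ onto $F$. Given $x \in X$: if $x \in F$, then $x \in R_F(x)$ trivially. If $x \notin F$, I claim $P_x(x) \in R_F(x)$. Indeed, for any $c \in F$ we have $P_x(x) - c = P_x(x) - P_x(c) = P_x(x - c)$, since $c \in F \subset F_x$ and $P_x$ fixes $F$; hence $\|P_x(x) - c\| = \|P_x(x-c)\| \le \|x - c\|$ because $\|P_x\| = 1$ and $x - c \in F_x$. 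So $R_F(x) \ne \emptyset$. \textbf{($\Rightarrow$)} Suppose $F$ is an existence set and fix $x \in X \setminus F$. Choose $d \in R_F(x)$, so $\|d - c\| \le \|x - c\|$ for all $c \in F$. Every element of $F_x = \spn[x]\oplus F$ has a unique representation $\alpha x + f$ with $\alpha \in \mathbb{R}$, $f \in F$; define $P_x(\alpha x + f) = \alpha d + f$. This is a well-defined linear map onto $F$ with $P_x|_F = \mathrm{Id}$, so it is a projection; it remains to show $\|P_x\| = 1$. For $\alpha = 0$ the map is the identity, so it suffices to bound $\|\alpha d + f\|$ by $\|\alpha x + f\|$ for $\alpha \ne 0$, and by homogeneity it is enough to take $\alpha = -1$, i.e.\ to show $\|f - d\| \le \|f - x\|$ for all $f \in F$ --- but this is exactly the defining property of $d \in R_F(x)$. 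Hence $\|P_x\| = 1$.

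\textbf{Main obstacle.} There is essentially no obstacle; the proof is a bookkeeping exercise once one spots the right correspondence. The one point deserving care is the direction of the reduction in the ($\Rightarrow$) part: one must notice that it suffices to control the norm on the slice $\alpha = -1$ (equivalently $\alpha = 1$ after reflecting $x$), because scaling by $|\alpha|$ and absorbing it into $f$ reduces the general case, and this is precisely where the inequality $\|f-d\| \le \|f-x\|$ matching the definition of $R_F(x)$ does all the work. Symmetrically, in ($\Leftarrow$) the key algebraic identity is $P_x(x) - c = P_x(x-c)$, valid because $F$ is a \emph{linear} subspace fixed pointwise by $P_x$; without linearity (only convexity) this identity fails, which is why this clean characterization is stated only for subspaces.
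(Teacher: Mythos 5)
Your proposal is correct and follows essentially the same route as the paper: the translation and scaling claims are verified by the same direct computations, and for the subspace characterization the paper likewise defines $P_x(tx+v_0)=tp+v_0$ with $p\in R_F(x)$ and obtains $\|P_x\|=1$ by the same homogeneity reduction (multiplying the defining inequality of $R_F(x)$ by $|t|$ and absorbing the scalar into the $F$-component), while the converse uses the identity $P_xy-v=P_x(y-v)$ exactly as you do.
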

\begin{proof}
Fix $ d \in X$ and $ x \in X.$ Let $ p \in R_F (x-d). $ Let $ c \in F+d, $ $c= c_1 +d, $ wher $ c_1 \in F.$ Then 
$$
\| c- (p+d)\| = \|c_1 - p \| \leq \| x-d -c_1 \| = \| x -c\|,
$$
which shows that $p+d \in R_{F+d}(x).$ Now fix $ t \in \mathbb{R}.$ If $ t =0, $ then $ F = \{0\}$ is obviously an existence set. If $ t \neq 0,$ fix $ p\in R_F(x/t).$ Let $c \in t F, $ $ c = t c_1.$ 
Observe that 
$$
\|p - c_1 \| \leq \| p - x/t \|
$$
and consequently 
$$
|t| \|p - c_1 \| \leq |t| \| p - x/t \|,
$$
which implies that $\| t p - c\| \leq \| t p - x\|.$ Hence $ t p \in R_{t F}(x).$  Now assume that $F$ is a linear subspace and fix $ x \in X \setminus F.$ Then any $ y \in F_x$ can be represented in the unique way as 
$ y = t x +v_o,$ where $ v_o \in F.$ Fix $ p \in R_F(x). $ We show that $ t p + v_o  \in R_F(t x + v_o) .$ This is obvious if $t = 0.$ So assume that $ t \neq 0.$ Fix $ v \in F.$ Since $ \frac{v-v_o}{t} \in F,$ 
$$
\| p - \frac{v-v_o}{t}\| \leq \| x - \frac{v-v_o}{t}\|
$$
and consequently
$$
|t| \| p - \frac{v-v_o}{t}\| \leq |t| \| x - \frac{v-v_o}{t}\|,
$$
which imples that 
$ \| tp + v_o -v\| \leq \| tx +v_o -v \| = \|y -v\|.$ 
Notice that a linear mapping $ P_x : F_x \rightarrow F$ defined by $ P_x(tx+v_o) = tp+v_o  $ belongs to $ {\cal P}(F_x, F).$ By the above reasoning, for any $y \in F_x,$  $ \| P_x y  \| = \|P_x y -0\| \leq  \|y\|, $ which shows that $ \|P_x\|=1.$  
Conversely, if there exists $ P_x \in {\cal P}(F_x,F),$ $ \|Px \|=1, $ then for any $v \in F,$ and $ y \in F_x,$
$$
\| P_xy -v \| = \| P_x(y-v)\| \leq \|y-v\|
$$
which shows our claim.
\end{proof}
\begin{lem}
 \label{compact}
(compare with \cite{BR1}, Lemma 1). Let $ X$ be a Banach space and let $C \subset X$ be a locally compact and convex set. For $ F \subset C,$ $ F \neq \emptyset, $ define 
$$
N(F) = \{ f:C \rightarrow C: \| y - f(x) \| \leq \|y-x\| \hbox{ for any } y\in F, x \in C \}.
$$
Then $ N(F)$ is compact in the topology of pointwise-weak convergence.
\end{lem}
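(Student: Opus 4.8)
The plan is to exhibit $N(F)$ as a closed subset of a product of compact sets and then apply Tychonoff's theorem; the only place where real work is needed is the verification that the defining conditions of $N(F)$ survive passage to a pointwise-weak limit.

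\emph{Step 1: bounding the coordinates via local compactness.} First I would use local compactness and convexity to show that every norm-bounded subset of $C$ is relatively norm-compact. Fix $x_0\in C$; by local compactness there is $r>0$ such that $K_0:=\{z\in C:\|z-x_0\|\le r\}$ is compact. If $x\in C$ with $\|x-x_0\|\ge r$, then the point $x_0+\tfrac{r}{\|x-x_0\|}(x-x_0)$ lies on the segment $[x_0,x]\subset C$ and at distance $r$ from $x_0$, hence belongs to $K_0$; consequently $\{z\in C:\|z-x_0\|\le R\}\subseteq x_0+\tfrac{R}{r}(K_0-x_0)$ for all $R\ge r$, and the right-hand side, being a homothet of the compact $K_0$, is compact. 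Now fix once and for all a point $y_0\in F$ (this is where $F\neq\emptyset$ enters). For every $f\in N(F)$ the defining inequality with $y=y_0$ reads $\|f(x)-y_0\|\le\|x-y_0\|=:\rho_x$ for all $x\in C$, so $f(x)\in K_x:=\overline{\{z\in C:\|z-y_0\|\le\rho_x\}}$, which by the above is norm-compact, hence weakly compact. (On the norm-compact set $K_x$ the weak and norm topologies coincide.)

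\emph{Step 2: the ambient compact space.} Each $f\in N(F)$ is a point of $\prod_{x\in C}K_x$, and the topology of pointwise-weak convergence, restricted to this product, is exactly the product of the weak topologies on the $K_x$. Since every $K_x$ is weakly compact, $\prod_{x\in C}K_x$ is compact by Tychonoff's theorem.

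\emph{Step 3: closedness of $N(F)$ in $\prod_{x\in C}K_x$.} Let a net $(f_\alpha)\subset N(F)$ converge pointwise-weakly to $f:C\to X$. For each $x$, $f(x)$ is a weak limit of the points $f_\alpha(x)\in C$; as $C$ is closed and convex it is weakly closed (Mazur), so $f(x)\in C$ and $f$ maps $C$ into $C$. For $y\in F$ and $x\in C$ we have $y-f_\alpha(x)\to y-f(x)$ weakly, and since the norm is weakly lower semicontinuous, $\|y-f(x)\|\le\liminf_\alpha\|y-f_\alpha(x)\|\le\|y-x\|$. Hence $f\in N(F)$, so $N(F)$ is closed in the compact space $\prod_{x\in C}K_x$, and therefore compact in the topology of pointwise-weak convergence.

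The main obstacle is Step 3: one must confirm both that the limit map still takes values in $C$ and that it still satisfies all the inequalities defining $N(F)$. The first point relies on the weak-closedness of the closed convex set $C$, the second on weak lower semicontinuity of the norm. The structural ingredient that makes the argument possible at all — turning an \emph{a priori} huge product $C^C$ into a product of compacta — is the observation in Step 1 that local compactness together with convexity forces bounded subsets of $C$ to be relatively norm-compact.
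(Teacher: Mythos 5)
Your proof is correct and follows the same overall architecture as the paper's: realize $N(F)$ inside the product $\prod_{x\in C}K_x$, invoke Tychonoff, and check closedness of $N(F)$ using weak lower semicontinuity of the norm together with Mazur's theorem to keep the limit values in $C$. The one place you genuinely diverge is Step 1. The paper reads the hypothesis as local \emph{weak} compactness: it sets $C_x=\{y\in C:\|y-x_0\|\le\|x-x_0\|\}$ with $x_0\in F$ and obtains weak compactness of $C_x$ directly, since a bounded, closed, convex subset of a convex, locally weakly compact set is weakly compact by Mazur. You instead read ``locally compact'' in the norm topology and prove the pleasant geometric fact that a locally norm-compact convex set has relatively norm-compact bounded subsets (your homothety argument $\{z\in C:\|z-x_0\|\le R\}\subseteq x_0+\tfrac{R}{r}(K_0-x_0)$ is correct). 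This buys you more elementary factors ($K_x$ norm-compact, so weak and norm topologies coincide there), but it narrows the scope: the lemma is later applied with $C=X$ a reflexive Banach space (Theorem \ref{existence}, Lemmas \ref{dir1} and \ref{dir2}), which is locally weakly compact but not locally norm-compact in infinite dimensions, so your Step 1 would not cover those applications. Under the weak reading, your Steps 2 and 3 go through verbatim once each factor is known to be weakly compact, so the only needed change is to replace Step 1 by the Mazur argument. (Both your proof and the paper's implicitly use that $C$ is norm-closed when concluding that the limit function maps $C$ into $C$; this is not stated in the lemma but holds in all the applications.)
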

\begin{proof}
Fix $ x_o \in F.$ For $x \in X$ set $ C_x = \{ y \in C: \|y-x_o\|\leq \|x-x_o\| \}.$ Notice that for $ f \in N(F) $
$$ 
\| f(x) - x_o \| \leq \| x-x_o\|.
$$
Hence $ N(F) \subset P = \Pi_{x \in C} C_x.$ Since $ C $ is convex and locally weakly compact, by the Mazur Theorem,  $ C_x,$ as a bounded subset of $C,$ is weakly compact. 
By the Tychonoff Theorem, $P$ is a compact set in the topology of pointwise-weak convergence, which we denote by $ \tau.$ Hence to show that $N(F) $ is compact, we need to demonstrate that $N(F)$ is a $ \tau$ closed subset of $P.$
Let $ \{f_{\gamma} \} \subset N(F)$ be a net $\tau$-converging to $ f \in P.$ Hence for any $ y \in F, f_{\gamma}(y) \rightarrow f(y)$ in the weak topology. Since $ f_{\gamma}(y) = y, $ for any $ \gamma, $ $f(y)=y.$
Moreover, for $y \in F, x \in C, $ 
$$
\|y - f(x) \| \leq \liminf_{\gamma} \|y - f_{\gamma}(x)\| \leq \|y -x \|.
$$
Hence $ f \in N(F),$ which completes the proof. 
\end{proof}
\begin{lem}
\label{maximal}
(compare with \cite{BR1}, Lemma 2)
Let $ F,C,X$ and $ N(F)$ be as in Lemma \ref{compact}. Then there exists $ r \in N(F)$ such that for any $ f\in N(F),$ $x \in C$ and $ y \in F$
$$
\| y - f(r(x))\| = \|y - r(x)\|.
$$
\end{lem}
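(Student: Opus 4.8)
The plan is to obtain the desired map $r$ as a ``minimal'' element of $N(F)$ for a natural preorder, and to produce such a minimal element by exploiting the compactness established in Lemma \ref{compact}. First I would record two elementary facts about $N(F)$: it contains the identity map of $C$, and it is closed under composition. Indeed, for $f,g \in N(F)$, $y \in F$ and $x \in C$ one has $\|y-f(g(x))\| \le \|y-g(x)\| \le \|y-x\|$ (applying the defining inequality of $f$ at the point $g(x)\in C$, then that of $g$), and $f(g(x)) \in C$; so $f\circ g \in N(F)$.

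Next I would associate to each $g \in N(F)$ the set
$$
A_g = \{\, h \in N(F) : \|y-h(x)\| \le \|y-g(x)\| \ \text{ for all } x \in C,\ y \in F \,\}.
$$
Each $A_g$ is nonempty (it contains $g$), and it is $\tau$-closed in $N(F)$: if a net $h_\gamma \in A_g$ $\tau$-converges to $h$, then for fixed $x,y$ weak lower semicontinuity of the norm gives $\|y-h(x)\| \le \liminf_\gamma \|y-h_\gamma(x)\| \le \|y-g(x)\|$. Also transitivity of the inequality shows $h \in A_g \Rightarrow A_h \subseteq A_g$. I would then verify that the family $\{A_g : g \in N(F)\}$, ordered by inclusion, has the property that every chain admits a lower bound belonging to the family: given a chain $\{A_{g_i}\}_{i\in I}$, its members are $\tau$-closed subsets of the $\tau$-compact set $N(F)$ (Lemma \ref{compact}) and are totally ordered by inclusion, hence enjoy the finite intersection property, so $\bigcap_{i} A_{g_i} \ne \emptyset$; any $h$ in this intersection satisfies $h \preceq g_i$ for every $i$, whence $A_h \subseteq A_{g_i}$ for all $i$, so $A_h$ is the required bound. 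By Zorn's Lemma there exists $r \in N(F)$ with $A_r$ minimal for inclusion.

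Finally I would check that this $r$ has the asserted property. Fix $f \in N(F)$; then $f\circ r \in N(F)$, and for every $x \in C$, $y \in F$ we have $\|y-f(r(x))\| \le \|y-r(x)\|$ (the defining inequality of $f$ applied at the point $r(x)\in C$), which says precisely that $f\circ r \in A_r$, hence $A_{f\circ r} \subseteq A_r$, hence $A_{f\circ r} = A_r$ by minimality. Since $r \in A_r = A_{f\circ r}$, this yields the reverse inequality $\|y-r(x)\| \le \|y-f(r(x))\|$, and therefore $\|y-f(r(x))\| = \|y-r(x)\|$ for all $f \in N(F)$, $x\in C$, $y\in F$, as required.

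The only genuine obstacle is the extraction of the minimal $A_r$: one cannot in general package the quantities $\|y-g(x)\|$ into a single real-valued functional to be minimized, because $F$ may be infinite, so the argument must go through compactness of $N(F)$ in the topology $\tau$ (Lemma \ref{compact}) together with $\tau$-closedness of the sets $A_g$, the latter resting on weak lower semicontinuity of the norm. Everything else is routine manipulation of the definition of $N(F)$ and of the preorder $\preceq$.
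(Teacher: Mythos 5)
Your proof is correct and follows essentially the same route as the paper: both obtain $r$ by applying the Kuratowski--Zorn lemma, using the $\tau$-compactness of $N(F)$ from Lemma \ref{compact} together with the $\tau$-closedness of the sets $A_g$ (via weak lower semicontinuity of the norm) to give every chain a lower bound, and both conclude with the composition argument $f\circ r\in A_r$. Your formulation in terms of inclusion-minimality of the sets $A_g$ is only a cosmetic variant of the paper's partial ordering defined directly on $N(F)$.
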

\begin{proof}
First we define a partial ordering in $ N(F).$ For any $f,g \in N(F) $ it is said that $ f < g$ if and only if for any $x \in C$ and $y \in F$
$$
\|f(x)-y\| \leq \|g(x) - y\| \hbox{ and } \| f(x_o)-y_o \| <   \| g(x_o)-y_o \|
$$
for some $ y_o \in F, $ $x_o \in C.$ It is  said that $ f \leq g$ if and only if $ f<g$ or $f=g.$ It is easy to see that $(N(F), \leq )$ is a partially ordered set. Observe that $ N(F) \neq \emptyset,$ since $ id_C \in N(F).$ 
Now we show that there exists a minimal element in  $(N(F), \leq ).$ First notice that for any $ f \in N(F)$ the set $ A_f = \{ g \in N(F): g \leq f\}$ is $ \tau$-closed in $N(F),$ which follows easily from definition of our partial ordering.
Hence by Lemma \ref{compact}, $A_f$ is a compact subset of $ N(F).$ Now let $G \subset N(F)$ be a chain. We show that there exists a smallest element in $G.$ Notice that if $ f_1, ...,f_n \in G$ then without loss of generality we can assume that
$ f_n \leq f_{n-1} \leq ...\leq f_1.$ Hence 
$$
\bigcap_{j=1}^n A_{f_j} = A_{f_n} \neq \emptyset. 
$$
Since the sets $ A_f $ are closed and $ N(F)$ is compact, $ \bigcap_{f \in G} A_f \neq \emptyset.$ Hence any $ g \in \bigcap_{f \in G} A_f$ is the smallest element in $G.$ By the Kuratowski-Zorn lemma, there exist $r \in N(F)$ a minimal element in 
$(N(F), \leq ). $ Let $ f \in N(F).$ Hence for any $ x \in C$ and $ y \in F,$
$$
\| y - (f \circ r)(x) \| \leq \|y - r(x) \| \leq \|y-x\|,
$$
which shows that $ f \circ r \leq r.$ If for some $ x_o \in C$ and $ y_o \in F,$ $\| y_o - (f \circ r)(x_o) \| < \|y_o - r(x_o) \|, $ then $ f \circ r \neq r$ and   $ f \circ r \leq r,$ a contradiction with the minimality of $r.$
\end{proof}
\begin{thm}
\label{existence1}
(compare with \cite{BR1}, Theorem 1)
Let $ F,C,X$ and $ N(F)$ be as in Lemma \ref{compact}. Assume that for any $x \in C$ there exists $ h \in N(F)$ such that $ h(x) \in F.$ Then there exists $ g \in N(F)$ such that for any $x \in C$ $g(x) \in F.$ In particular, $r(x) \in R_F(x)$ for any $ x \in C,$ where $r$ is a maximal element from Lemma \ref{maximal}. If $C =X, $ then $F$ is an existence set in $X.$ 
\end{thm}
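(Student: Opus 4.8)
The plan is to take $g = r$, the minimal element of $(N(F),\leq)$ furnished by Lemma \ref{maximal}, and to show directly that $r(x)\in F$ for every $x\in C$. Fix $x\in C$. Applying the hypothesis to the point $r(x)\in C$, we obtain some $h\in N(F)$ (a priori depending on $x$) with $h(r(x))\in F$. Invoking the conclusion of Lemma \ref{maximal} with this particular $f=h$, we get
$$
\| y - h(r(x)) \| = \| y - r(x) \|
$$
for every $y\in F$. Since $h(r(x))$ itself belongs to $F$, we may substitute $y = h(r(x))$, which gives $0 = \| h(r(x)) - r(x) \|$, hence $r(x) = h(r(x)) \in F$. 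As $x\in C$ was arbitrary, $g=r$ maps all of $C$ into $F$.

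It then remains to verify the two ``in particular'' assertions, both of which are immediate. For the first, fix $x\in C$ and $c\in F$; since $r\in N(F)$, the defining inequality with $y=c$ reads $\| c - r(x) \| \le \| c - x \|$, and together with $r(x)\in F$ this is exactly the statement $r(x)\in R_F(x)$. For the second, if $C=X$ the preceding line shows $R_F(x)\neq\emptyset$ for every $x\in X$, i.e. $F$ is an existence set in $X$.

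The only genuine content is the first paragraph, and within it the one decisive move is the substitution $y=h(r(x))$ in the identity from Lemma \ref{maximal}: minimality of $r$ has been arranged precisely so that no map in $N(F)$ can move $r(x)$ strictly closer to any point of $F$ unless it fixes $r(x)$, while the hypothesis supplies an $h$ that drives $r(x)$ all the way into $F$. I therefore do not expect a real obstacle here; the work has been front-loaded into Lemmas \ref{compact} and \ref{maximal}. The only mild point to watch is that the map $h$ delivered by the hypothesis depends on $x$, but this is harmless because Lemma \ref{maximal} applies to every $f\in N(F)$ simultaneously, so the argument goes through for that $h$ exactly as written.
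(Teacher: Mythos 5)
Your proof is correct and follows essentially the same route as the paper: take $r$ from Lemma \ref{maximal}, apply the hypothesis at the point $r(x)$ to get $h$ with $h(r(x))\in F$, and substitute $y=h(r(x))$ into the identity of Lemma \ref{maximal} to conclude $\|h(r(x))-r(x)\|=0$, hence $r(x)\in F$. The only difference is that you also spell out the two ``in particular'' claims, which the paper leaves implicit.
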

\begin{proof}
Let $r \in N(F)$ be as in Lemma \ref{maximal}. We show that for any $x \in X,$ $ r(x) \in F.$ Notice that, since $ r(x) \in C,$  there exists $ h \in N(F)$ such that $ h(r(x)) \in F.$  Let $ y = h(r(x)).$ 
Observe that by Lemma \ref{maximal}, 
$$
0= \|h(r(x)) - y \| = \|y - r(x)\|.
$$
Hence $ r(x) = y \in F, $ as required. 
\end{proof}
\begin{thm}
\label{existence}
Let $ X$ be a reflexive space. For $ f \in X^* \setminus \{0\}$ define 
$$
G= \{ x \in X: f(x) \leq 0\}.
$$
If $ G$  is an existence set then $F=ker(f)$ is one-complemented in $X.$
\end{thm}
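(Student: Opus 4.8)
The plan is to reduce the whole statement to the last assertion of Lemma~\ref{basicp}. Since $f\neq 0$, $F=\ker(f)$ is a hyperplane, so for every $x\in X\setminus F$ we have $F_x=\spn[x]\oplus F=X$; hence Lemma~\ref{basicp} specialises to: $F$ is an existence set if and only if there is a norm-one linear projection of $X$ onto $F$, i.e.\ $F$ is one-complemented. Therefore it suffices to prove that, under the hypothesis that $G$ is an existence set, $R_F(x)\neq\emptyset$ for every $x\in X$.

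Assume first $f(x)>0$ (the case $f(x)=0$ being trivial, as then $x\in R_F(x)$). Because $G$ is an existence set we may pick $d\in R_G(x)$; then $f(d)\le 0$ and $\|d-c\|\le\|x-c\|$ for all $c\in G$, in particular for all $c\in F$, since $F\subset G$. The one real idea is to \emph{slide $d$ along the segment $[d,x]$ onto the bounding hyperplane} $F$: put
$$
s=\frac{-f(d)}{f(x)-f(d)}\in[0,1),\qquad d'=(1-s)d+sx .
$$
By the choice of $s$ one has $f(d')=(1-s)f(d)+sf(x)=0$, so $d'\in F$; and for every $c\in F$, convexity of the norm together with $\|d-c\|\le\|x-c\|$ gives
$$
\|d'-c\|\le(1-s)\|d-c\|+s\|x-c\|\le\|x-c\| .
$$
Hence $d'\in R_F(x)$, so $R_F(x)\neq\emptyset$ whenever $f(x)>0$.

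For $f(x)<0$ I apply the previous step to the functional $-f$ and to the half-space $-G=\{y:(-f)(y)\le 0\}$, which is again an existence set by the scaling part of Lemma~\ref{basicp} (take $t=-1$, noting $-G=(-1)\cdot G$). Since $(-f)(x)>0$ and $\ker(-f)=F$, this yields $R_F(x)\neq\emptyset$ in this case as well. Consequently $R_F(x)\neq\emptyset$ for all $x\in X$, so $F$ is an existence set, and by the reduction above $F$ is one-complemented in $X$.

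There is essentially no obstacle here: the entire content is the sliding step, which is a two-line estimate, while everything else is bookkeeping via Lemma~\ref{basicp}. (Incidentally, this argument does not seem to use reflexivity of $X$ at all; reflexivity will presumably be needed only for the sharper results of the later sections.)
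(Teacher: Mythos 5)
Your argument is correct, and it is leaner than the paper's. The one real computation you make --- replacing $d\in R_G(x)$ by the convex combination $d'=(1-s)d+sx$ with $s$ chosen so that $f(d')=0$, and then estimating $\|d'-c\|\le(1-s)\|d-c\|+s\|x-c\|\le\|x-c\|$ for $c\in\ker(f)\subset G$ --- is exactly the geometric step the paper also uses: there, for $f(Px)<0$ one forms $Q=\alpha\,\mathrm{Id}_X+(1-\alpha)P$ with $f(Qx)=0$ and appeals to convexity of $N(F)$, which at the point $x$ is the same inequality. Where you genuinely diverge is in the packaging. The paper first builds a single map $P\in N(F)$ on all of $X$ by homogeneously extending a selection of $R_G$ over the unit sphere, and then invokes Theorem \ref{existence1} (hence Lemmas \ref{compact} and \ref{maximal}: weak compactness of bounded convex sets plus a Kuratowski--Zorn argument) to convert the $x$-dependent maps into one global retraction onto $\ker(f)$; reflexivity enters only there. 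You observe, correctly, that none of this machinery is needed for the stated conclusion: being an existence set is a pointwise condition, your slid point $d'$ already lies in $R_F(x)$, and the hyperplane specialisation of Lemma \ref{basicp} (here $F_x=\spn[x]\oplus\ker(f)=X$ for every $x\notin\ker(f)$) then gives one-complementedness. The reduction of the case $f(x)<0$ to the case $f(x)>0$ via $-G=(-1)G$ and the scaling part of Lemma \ref{basicp} is also fine. So your parenthetical remark is accurate: the theorem holds in an arbitrary Banach space, and the paper's use of reflexivity is an artifact of its (dispensable) appeal to Theorem \ref{existence1}.
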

\begin{proof}
Since $ G$ is an existence set, for any $ x \in S(X),$ we can select  $P_ox \in R_G(x).$ Define a mapping $ P: X \rightarrow G$ by 
$P0 =0,$ $ Px = \|x\| P_o(x/\|x\|) $ for $ x \neq 0.$ Notice that $ Px \in R_G(x)\subset G$ for any $ x \in X.$ This is obvious if $x=0.$ If $x \neq 0,$ for any $ d \in G,$
$$
\| P_o(x/\|x\|) - d/\|x\|) \leq \|(x-d)/\|x\|\|
$$
and consequently 
$$
\| Px -d\| = \|\|x\| P_o(x/\|x\|) - d) \leq \|x-d\|. 
$$
In particular, for any $ d \in F=ker(f),$ $\|Px -d\| \leq \|x-d\|$ and $ P|_{F} = id_{F}.$  This shows that $ P \in N(F)$ (in our case $C=X).$ Now we show that for any $ x \in X$ there exists $Q \in N(F)$ 
such that $ Qx \in F.$ Fix $ x \in X.$  If $f(x)=0, $ then $ Id_F (x) \in F.$ 
Now assume that $f(x) >0.$ If $ f(Px)=0,$ then 
$Px \in F.$ If $f(Px)<0,$  then there exists $ 0 < \alpha < 1$ such that $f(\alpha x + (1-\alpha) Px)=0.$ Since $ N(F)$ is convex, $ Q = \alpha Id_X + (1-\alpha)P \in N(F)$ and $Qx \in F.$ Now assume that $ f(x) <0$ 
and set $ G_1 = \{ x \in X: f(x) \geq 0 \}. $ By Lemma \ref{basicp} for any $x \in X,$  $ P_1x = -P(-x) \in R_{G_1}(x) \subset G_1.$ Since $ F \subset G_1,$ for any $d \in F,$ $ \|P_1x -d \| \leq \|x-d\|$ and $P_1|_F = id_F.$  This shows that $ P_1 \in N(F).$
If $ f(P_1x) =0, $ then $ P_1x \in F.$ If $ f(P_1 x) >0,$   then there exists $ 0 < \alpha < 1$ such that $f(\alpha x + (1-\alpha) P_1x)=0.$ Since $ N(F)$ is convex, $ Q_1 = \alpha Id_X + (1-\alpha)P_1 \in N(F)$ and $Q_1x \in F.$ By Theorem \ref{existence1}
applied to $C=X,$ there exists $Q:X \rightarrow ker(f)$ such that $ Q|_{ker(f)} = id|_{ker(f)} $ and and $ \|Qx-d\| \leq \|x-d\|$ for any $ x \in X$ and $d\in ker(f).$ Hence $ker(f)$ is an existence set. By Lemma \ref{basicp}, 
$ker(f)$ is one-complemented in $X.$   
\end{proof}
\begin{thm}
\label{contractive}
Let $X$ be a Banach space and let $ f \in X^* \setminus \{0\}.$ If $ ker(f)$ is one-complemented in $X,$ then for any $ d \in \mathbb{R}$ the set $ G_d = \{x \in X: f(x) \leq d\}$ is a contractive subset of $X.$    
\end{thm}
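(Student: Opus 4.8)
The plan is to build an explicit nonexpansive projection of $X$ onto $G_d$ out of the norm-one linear projection onto $ker(f)$. Fix $P \in {\cal P}(X,ker(f))$ with $\|P\|=1$. Since $ker(f)$ is a hyperplane, $X = ker(f) \oplus ker(P)$ forces $ker(P)$ to be one-dimensional, and $ker(P)$ is not contained in $ker(f)$ (otherwise $X = ker(f)$), so we may choose $v \in ker(P)$ with $f(v)=1$. For any $x \in X$ one has $x - Px \in ker(P)$, hence $x - Px = \lambda v$ for a scalar $\lambda$; applying $f$ gives $\lambda = f(x)$, so
$$
x = Px + f(x)\,v \qquad (x \in X).
$$
I then define $Q:X \rightarrow X$ by $Qx = Px + \min\{f(x),d\}\,v$; equivalently, $Qx = x$ if $f(x) \le d$ and $Qx = Px + d v$ if $f(x) > d$.

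Next I would record the routine verifications. The map $Q$ is continuous, being the sum of a linear map and the composition of $f$, the $1$-Lipschitz function $t \mapsto \min\{t,d\}$, and scalar multiplication by $v$. Since $f(Qx) = \min\{f(x),d\} \le d$, we get $Q(X) \subset G_d$; and if $x \in G_d$ then $\min\{f(x),d\} = f(x)$, so $Qx = x$, i.e. $Q|_{G_d} = id_{G_d}$. Thus $Q$ is a projection onto $G_d$, and it only remains to prove that $Q$ is nonexpansive, for then $Q$ witnesses that $G_d$ is a contractive subset of $X$.

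To that end I fix $x,y \in X$, put $u = x - y$, $a = f(x)$, $b = f(y)$, and note from the decomposition above that $u = Pu + (a-b)v$ while $Qx - Qy = Pu + \big(\min\{a,d\} - \min\{b,d\}\big)v$. If $a,b \le d$ then $Qx - Qy = u$; if $a,b \ge d$ then $Qx - Qy = Pu$, and $\|Pu\| \le \|P\|\,\|u\| = \|u\|$. The remaining and only nontrivial case is, say, $a > d \ge b$: then $\min\{a,d\} - \min\{b,d\} = d-b$, and putting $\theta = (d-b)/(a-b) \in [0,1]$ one checks directly that $Qx - Qy = \theta u + (1-\theta)Pu$. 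The triangle inequality together with $\|Pu\| \le \|u\|$ then gives $\|Qx - Qy\| \le \theta\|u\| + (1-\theta)\|u\| = \|u\|$, completing the proof. The one obstacle worth flagging is precisely this mixed case: one must resist treating $Q$ coordinatewise and instead exhibit $Qx - Qy$ as a convex combination of $x-y$ and its projection $P(x-y)$, which is exactly where the hypothesis $\|P\| = 1$ is used.
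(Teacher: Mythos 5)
Your proof is correct and follows essentially the same route as the paper: the same projection $Qx = Px + \min\{f(x),d\}v$ (the paper normalizes to $d=0$ via translation and writes $Qx=Px$ on $\{f>d\}$), the same decomposition $x = Px + f(x)v$, and the same reduction to the mixed case. Your convex-combination identity $Qx-Qy=\theta u+(1-\theta)Pu$ is just a cleaner repackaging of the paper's argument that $t\mapsto\|ty+v\|$ is convex with minimum at $0$, hence nondecreasing on $[0,+\infty)$.
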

\begin{proof}
By Lemma \ref{basicp}, we can assume that $d=0.$  Fix $P \in {\cal P}(X, ker(f)),$ $ \|P\|=1.$ Define $ Q : X \rightarrow G_o$ by $ Qx = x$ if $ f(x) \leq 0$ and $ Qx = Px$ if $ f(x) >0.$   We show that for any $ x,z \in X$ $ \|Qx - Qz\| \leq \|x-z\|.$
If $f(x) \leq 0$ and $ f(z) \leq 0,$ then
$$ 
\|Qx -Qz\| = \| x-z \|.
$$ 
If $ f(x) \geq 0 $ and $ f(z) \geq 0,$ then 
$$ 
\|Qx -Qz\| = \| Px-Pz \| \leq \| x-z \|. 
$$ 
Now assume that $ f(x) >0$ and $ f(z) < 0.$
Let  $ y \in X$ be so chosen that $ Py=0$ and $ f(y) =1.$ Then, it is easy to see that for any $w \in X,$ $ Pw = w - f(w) y.$ Hence if $ w = \alpha y + v$ where $ v \in ker(f)$  and $ \alpha \in \mathbb{R},$
then 
$$
\|Pw\|= \|\alpha Py + Pv \| = \|v \| \leq \|w\| = \|\alpha y + v \|.  
$$
This means that for any $ v \in ker(f)$ and $ t \geq 0$ the function $g_v(t) = \|ty+v\| $ satisfies 
$$ 
g_v(t) \geq g_v(0) =\|v\|.
$$
Sinec $g_v$ is a convex function, $ g_v$ is increasing in $ [0, +\infty). $ Since $f(x) >0,$ if $x =\alpha y +v,$ then $ f(x) = \alpha >0.$
Analogoulsy, since $f(z) <0,$ if $z =\beta y +w,$ then $ f(z) = \beta <0.$ Notice that, since the function $ g_{v-w}$ is increasing in $[0, + \infty ),$  
$$
\|Qx - Qz\| = \| Px -z \|= \| v - (\beta y + w\| = \| -\beta y + (v-w)\|
$$
$$
= g_{v-w}(-\beta) \leq g_{v-w}(-\beta + \alpha ) =
$$
$$
= \|(-\beta + \alpha )y + v-w\| = \|\alpha y + v - (\beta y + w) \| = \|x-z\|,
$$
as required. 
Since $ Q|_{G_o} = id_{G_o},$ $G_o$ is a contractive set. The proof is complete. 
\end{proof}
\begin{lem}
 \label{mink}
Let $X$ be a Banach space and let $ C \subset X$ be a closed, convex and bounded set such that $ 0 \in int(C).$ Define for $ x \in X, $ 
 $$
 f(x) = \inf \{ t>0: x/t \in C\}.
 $$
 Then $f$ is a convex and continuous function.
\end{lem}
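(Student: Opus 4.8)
The plan is to recognize $f$ as the Minkowski gauge of $C$ and to deduce convexity and Lipschitz continuity from three standard facts: finiteness, positive homogeneity, and subadditivity. First I would fix $r>0$ such that $\|z\|\le r$ implies $z\in C$; such an $r$ exists because $0\in int(C)$. Then for every $x\in X$ and every $t>\|x\|/r$ one has $\|x/t\|<r$, hence $x/t\in C$, so the set $\{t>0:x/t\in C\}$ is nonempty and $0\le f(x)\le\|x\|/r<\infty$. Thus $f$ is well defined and real valued, with $f(0)=0$ since $0\in C$.

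Next I would check positive homogeneity: for $\lambda>0$ the substitution $t=\lambda s$ shows that $\{t>0:\lambda x/t\in C\}=\{\lambda s:s>0,\ x/s\in C\}$, so $f(\lambda x)=\lambda f(x)$; together with $f(0)=0$ this gives $f(\lambda x)=\lambda f(x)$ for all $\lambda\ge 0$. For subadditivity, fix $x,y\in X$ and choose $s>f(x)$, $t>f(y)$ with $x/s\in C$ and $y/t\in C$ (possible by the definition of the infimum). Since
$$
\frac{x+y}{s+t}=\frac{s}{s+t}\cdot\frac{x}{s}+\frac{t}{s+t}\cdot\frac{y}{t}
$$
is a convex combination of two points of $C$, convexity of $C$ forces $(x+y)/(s+t)\in C$, hence $f(x+y)\le s+t$; letting $s\downarrow f(x)$ and $t\downarrow f(y)$ yields $f(x+y)\le f(x)+f(y)$. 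Convexity of $f$ then follows at once: for $\alpha\in[0,1]$,
$$
f(\alpha x+(1-\alpha)y)\le f(\alpha x)+f((1-\alpha)y)=\alpha f(x)+(1-\alpha)f(y).
$$

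Finally, continuity is a consequence of subadditivity together with the bound $f(z)\le\|z\|/r$ from the first step: writing $x=y+(x-y)$ gives $f(x)\le f(y)+\|x-y\|/r$, and by symmetry $f(y)\le f(x)+\|x-y\|/r$, so $|f(x)-f(y)|\le\|x-y\|/r$; hence $f$ is Lipschitz and in particular continuous. I do not anticipate any genuine obstacle here. The only point that needs a little care is the selection of $s>f(x)$ with $x/s\in C$ in the subadditivity argument; this is legitimate because $f(x)$ is the infimum of the set of parameters $t$ with $x/t\in C$, so such $t$ exist arbitrarily close to $f(x)$ from above. It is also worth remarking that closedness and boundedness of $C$ are not used in the two stated conclusions — they only ensure, in addition, that $f(x)>0$ whenever $x\ne 0$.
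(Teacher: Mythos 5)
Your argument is correct. The convexity half (finiteness of the infimum, positive homogeneity, subadditivity via the convex combination $\frac{x+y}{s+t}=\frac{s}{s+t}\frac{x}{s}+\frac{t}{s+t}\frac{y}{t}$) is essentially identical to the paper's. Where you genuinely diverge is in the continuity step. The paper proves sequential continuity in two halves: $\limsup_n f(x_n)\le f(x)$ by showing that $x/t$ for $t>f(x)$ is an \emph{interior} point of $C$ (so that $x_n/t\in C$ eventually), and $f(x)\le\liminf_n f(x_n)$ by using that $C$ is \emph{closed} (a limit of points $x_{n_k}/t\in C$ stays in $C$). You instead combine subadditivity with the absorption bound $f(z)\le\|z\|/r$ to get the global Lipschitz estimate $|f(x)-f(y)|\le\|x-y\|/r$. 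Your route is shorter, gives the quantitatively stronger Lipschitz conclusion, and — as you correctly observe — dispenses with the closedness of $C$ entirely (the paper's lower-semicontinuity half is the only place closedness is invoked in its proof). The one point worth making fully explicit in your subadditivity step is that the set $\{t>0:x/t\in C\}$ contains the whole ray $(f(x),\infty)$, because for $t'>t$ the point $x/t'=\frac{t}{t'}\,\frac{x}{t}+(1-\frac{t}{t'})\cdot 0$ is a convex combination of $x/t\in C$ and $0\in C$; this justifies choosing $s>f(x)$ with $x/s\in C$, but it is the standard fact you alluded to and does not affect correctness.
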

\begin{proof}
First we show that for any $ x_1, x_2 \in X,$ $ f(x_1+x_2) \leq f(x_1) + f(x_2).$ Fix $ x_1, x_2 \in X$ and $ \epsilon > 0.$ Then there exists $ t_1 >0$ and $ t_2 >0$ satisfying 
 $t_1 < f(x_1) + \epsilon $ and $t_2 < f(x_2) + \epsilon  $ such that $ \frac{x_i}{t_i} \in C $ for $ i=1,2.$ Notice that 
$$
\frac{x_1 +x_2}{t_1+t_2} = \frac{t_1}{t_1+t_2} \frac{x_1}{t_1} + \frac{t_2}{t_1+t_2} \frac{x_2}{t_2}.
$$
Since $ C$ is convex, by definiton of $ t_1$ and $t_2,$ $\frac{x_1 +x_2}{t_1+t_2} \in C.$ This shows that 
$$ 
f(x_1+x_2) \leq f(x_1) + f(x_2) + 2\epsilon
$$
and consequently  $f$ is a subadditive function. Moreover, it is easy to see that 
$ f(ax) = af(x)$ for any $ a\geq 0$ and $ x \in X. $ Hence for any $ a,b \geq 0,$ $ a+b=1,$ and $x,y \in X$   
$$
f(ax + by) \leq f(ax) + f(by) \leq af(x) + bf(y),
$$
which shows that $f $ is a convex function. Now we prove that $f$ is continuous. Let $ \|x_n -x\| \rightarrow 0.$ Fix $\epsilon >0$ and $ t>0$ such that $ f(x) < t < f(x) + \epsilon.$ Since $ 0 \in int(C),$ there exists $ r>0$ such that 
$B_o(x/t,r) \subset C.$ Hence for $ n \geq n_o,$ $ x_n/t \in B_o(x/t,r) \subset C.$ Hence for any $ n \geq n_o,$ $ f(x_n) \leq t \leq f(x) + \epsilon$ and conseqently, 
$$
\limsup_{n} f(x_n) \leq f(x).
$$
Now, assume on the contrary that
$$
0 \leq d= \liminf_{n} f(x_n) < f(x).
$$
Fix $ d < t < f(x).$ Hence $ f(x) >0, $ so $ x \neq 0.$ Then there exists a subsequence $ x_{n_k} $ such that $ f( x_{n_k}) <t$ and consequently  $ x_{n_k}/t \in C.$ Since $ C$ is closed, $ x/t \in C. $ Hence $ f(x) \leq t,$ which is a contradiction. 
Finally we get
$$
\limsup_{n} f(x_n) \leq f(x) \leq \liminf_{n} f(x_n)
$$
which shows that $ f(x) = \lim_{n} f(x_n),$ as required. 
\end{proof}
\begin{rem}
If $C = -C$ then the function $f$ from Lemma \ref{mink} is the Minkowski functional of $C.$
\end{rem}

\begin{lem}
\label{inters}
Let $X$ be a Banach space and let $C \subset X$ be a closed, convex and bounded set such that $ 0 \in int(C).$ Let $ \{ x_n\} \subset \sigma(C)$ be a dense subset of $ \sigma(C)$ such that for any $ n \in \mathbb{N}$ there exists exactly one 
supporting functional $f_n$ for $x_n,$ with $ \|f_n\|=1.$ 
Assume that $ f_n(x_n) = d_n $ and $ f_n(v) \leq d_n$ for any $ v \in V.$ Put 
$$
D_n = \{ x \in X: f_n(x) \leq d_n\}.
$$
Then $ C = \bigcap_{n \in \mathbb{N}} D_n.$
\end{lem}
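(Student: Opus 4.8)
The inclusion $C\subseteq\bigcap_{n}D_{n}$ is immediate from the standing hypothesis $f_{n}(v)\le d_{n}$ for all $v\in C$, which says exactly that $C\subseteq D_{n}$ for every $n$. The content is in the reverse inclusion, which I would prove in contrapositive form: given $x\notin C$, I will exhibit an index $n$ with $f_{n}(x)>d_{n}$.

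Fix $x\notin C$. The first step is to place a boundary point on the ray from $0$ through $x$: using the functional $f$ attached to $C$ in Lemma \ref{mink} (or, more directly, taking the last point of the segment $[0,x]$ that still lies in the closed convex set $C$, and using $0\in\operatorname{int}(C)$), one obtains a scalar $\lambda>1$ and a point $p\in\sigma(C)$ with $x=\lambda p$. Next fix $r>0$ with $B(0,r)\subseteq C$; since $\|f_{n}\|=1$, the hypothesis $f_{n}(v)\le d_{n}$ on $C\supseteq B(0,r)$ yields the uniform lower bound $d_{n}\ge r$ for every $n$. This is where $0\in\operatorname{int}(C)$ is used a second time, and it is precisely what lets one avoid any weak$^{*}$-compactness argument for the family $\{f_{n}\}$.

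Now invoke density of $\{x_{n}\}$ in $\sigma(C)$ to choose a subsequence with $x_{n_{k}}\to p$. From $x=\lambda p$ and linearity,
$$
f_{n_{k}}(x)=\lambda f_{n_{k}}(p)=\lambda\bigl(f_{n_{k}}(x_{n_{k}})+f_{n_{k}}(p-x_{n_{k}})\bigr)=\lambda\bigl(d_{n_{k}}+f_{n_{k}}(p-x_{n_{k}})\bigr),
$$
and since $|f_{n_{k}}(p-x_{n_{k}})|\le\|p-x_{n_{k}}\|$,
$$
f_{n_{k}}(x)-d_{n_{k}}\ \ge\ (\lambda-1)\,d_{n_{k}}-\lambda\,\|p-x_{n_{k}}\|\ \ge\ (\lambda-1)\,r-\lambda\,\|p-x_{n_{k}}\|.
$$
The right-hand side converges to $(\lambda-1)r>0$, so $f_{n_{k}}(x)>d_{n_{k}}$ for all large $k$; hence $x\notin D_{n_{k}}$ and therefore $x\notin\bigcap_{n}D_{n}$. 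Together with the first paragraph this gives $C=\bigcap_{n}D_{n}$.

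\noindent The only step requiring a little care is the construction of $p$ (a routine consequence of $C$ being closed, convex and bounded with $0$ in its interior); once $p$ and the bound $d_{n}\ge r$ are in hand, the argument reduces to the single estimate above. I also note that smoothness of the points $x_{n}$ — the uniqueness of $f_{n}$ — is not actually needed for this lemma: only that $\{x_{n}\}$ is dense in $\sigma(C)$, that $\|f_{n}\|=1$, and that $f_{n}$ supports $C$ at $x_{n}$.
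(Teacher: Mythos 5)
Your proof is correct and follows essentially the same route as the paper's: reduce to a boundary point on the ray through the exterior point, approximate it by the dense sequence $\{x_n\}$, and use the support inequality to force $f_{n_k}(x) > d_{n_k}$ for large $k$. Your direct lower bound $d_n \ge r$ obtained from $B(0,r) \subseteq C$ and $\|f_n\| = 1$ is a small but genuine streamlining of the paper's argument (which proves boundedness of $\{d_n\}$ and $\liminf_n d_n > 0$ by contradiction and then extracts a convergent subsequence), and your remark that uniqueness of the supporting functional is not actually used in this lemma is accurate.
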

\begin{proof}
By definition of $ f_n$ and $D_n ,$  $ C \subset  \bigcap_{n \in \mathbb{N}} D_n.$ Now assume on the contrary that  there exists $y_o \in  \bigcap_{n \in \mathbb{N}} D_n \setminus C.$ Since $ 0 \in int(C),$ there exists $t \in (0,1)$ such that 
$ ty_o \in \sigma(C).$ Fix $ \{ y_n \} \subset  \{x_n\} $ such that 
\begin{equation} 
\label{dense}
\| y_n - ty_o\| \rightarrow 0.
\end{equation} 
This is possible, since  $ \{ x_n\} $ is a dense subset of $ \sigma(C).$ Let $d_n$ and $f_n$ be so chosen that  
$ f_n(y_n) = d_n$ and $ V \subset D_n = \{ x \in X: f_n(x) \leq d_n\}.$ Now we show that $\{d_n\}$ is bounded. Assume on the contrary that this is not true. Without loss of generality, passing to a subsequence, if necessary,  we can assume that 
$ d_n \rightarrow +\infty .$ Let $R >0 $ be so chosen that $ C \subset B(0,R).$  Since $ f_n(y_n) =d_n, $ and $ y_n \in C, $ for $ n \geq n_o,$
$$
\sup\{ f_n(z): z \in B(0,R) \} \geq f_n(y_n) \geq 2R.
$$
Hence  
$$
\sup\{ f_n(z): z \in B(0,1) \} \geq 2
$$
and consequently, $ \| f_n\| >2 $ for $ n \geq n_o$ which is a contradiction. Now we show that $ e_o = \liminf_n d_n > 0.$ Assume on the contrary that  $ \liminf_n d_n = 0.$ Without loss of generality, passing to a subsequence, if necessary,  we can assume that 
$ d_n \rightarrow 0 .$ Fix $ r>0$ such that $B(0,r) \subset int(C).$ Fix $ n_o \geq 2 $ such that $ \frac{r}{n_o} <1. $ Fix $ k_o$ such that $ 0 \leq d_k < \frac{r}{n_o}$ for $k \geq k_o.$ Fix $ x \in B(0,r).$ If $ f_k(x) \geq 0,$ then
$$
0 \leq f_k(x) \leq d_k < \frac{r}{n_o} .
$$
If $ f_k(x) < 0, $ then $f_k(-x) \geq 0 $ and $ f_k(-x) \leq d_k <  \frac{r}{n_o}.$ Hence 
$$
\sup\{ |f_k(x)| : x \in B(0,r)\} \leq \frac{r}{n_o}
$$
and consequenntly $ \| f_k \| \leq \frac{1}{n_o} < 1$ which is a contradiction. Without loss of generality, passing to a subsequence if necessary, by (\ref{dense}), we can assume that $ \|y_n - ty_o \| \rightarrow 0$ and $ d_n \rightarrow e_o > 0.$  
Notice that 
$$
f_n(ty_o)= f_n(y_n) - f_n(y_n - ty_o)= d_n - f_n(y_n - ty_o) \geq d_n - \|y_n - ty_o\|.
$$
Hence 
$$
\liminf_n f_n(y_o) = \frac{\liminf_n f_n(ty_o)}{t} \geq \frac{\lim_n d_n}{t} = \frac{e_o}{t}.
$$
Since $ t \in (0,1)$ and $ e_o >0,$ $f_n(y_o) >d_n$ for $n \geq n_o.$ Hence $y_o \notin  \bigcap_{n \in \mathbb{N}} D_n,$ which is a contradiction. The proof is complete.
\end{proof}
\begin{lem}
 \label{dir1}
 Let $X$ be a reflexive Banach space. Let $ \{F_{t}\}_{t \in T}$ be a family of convex existence sets directed by $ \subset .$ Then $ F = cl(\bigcup_{t \in T} F_t)$ is also an existence set.  
\end{lem}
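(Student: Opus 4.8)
The plan is to fix $x\in X$ and produce a point $d\in R_F(x)$ as a common point of the weak closures of the tails of a net of best coapproximations chosen in the individual sets $F_t$. Since each $F_t$ is an existence set, for every $t\in T$ pick $d_t\in R_{F_t}(x)$, so that $\|d_t-c\|\le\|x-c\|$ for all $c\in F_t$. Fix one index $t_0\in T$ and a point $c_0\in F_{t_0}$; for every $t\ge t_0$ (that is, $F_{t_0}\subset F_t$) we have $c_0\in F_t$, hence $\|d_t-c_0\|\le\|x-c_0\|$, so the whole tail $\{d_t:t\ge t_0\}$ is contained in the closed ball $B_0=\{y\in X:\|y-c_0\|\le\|x-c_0\|\}$, which by reflexivity of $X$ is weakly compact.

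For $t\ge t_0$ set $K_t=\overline{\{d_s:s\ge t\}}^{\,w}$, the closure in the weak topology. Each $K_t$ is a nonempty weakly compact subset of $B_0$, and, $\{F_t\}$ being directed by $\subset$, the family $\{K_t\}_{t\ge t_0}$ has the finite intersection property: for $t_*$ above $t_1,\dots,t_n$ one has $K_{t_*}\subset K_{t_1}\cap\dots\cap K_{t_n}$. Hence there is a point $d\in\bigcap_{t\ge t_0}K_t$, and I claim $d\in R_F(x)$.

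First, $d\in F$. Because the $F_t$ are convex and directed by inclusion, $\bigcup_{t}F_t$ is convex, so $F=cl(\bigcup_{t}F_t)$ is closed and convex, hence weakly closed; since every $d_s$ lies in $F$, so does $K_{t_0}$, and therefore $d\in F$. Second, the coapproximation inequality. Let first $c\in\bigcup_{t}F_t$, say $c\in F_s$; using directedness we may take $s\ge t_0$, and then $c\in F_u$ for every $u\ge s$, whence $\|d_u-c\|\le\|x-c\|$. Thus $\{d_u:u\ge s\}$ lies in the closed convex --- hence weakly closed --- set $\{y:\|y-c\|\le\|x-c\|\}$, which therefore contains $K_s$, in particular $d$; that is, $\|d-c\|\le\|x-c\|$. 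Finally, for an arbitrary $c\in F$ pick $c_n\in\bigcup_{t}F_t$ with $c_n\to c$; then $\|d-c_n\|\le\|x-c_n\|$ for each $n$, and letting $n\to\infty$ gives $\|d-c\|\le\|x-c\|$. So $d\in R_F(x)$, and $F$ is an existence set.

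The argument uses only three ingredients: reflexivity, to make $B_0$ weakly compact and run the finite-intersection argument for the nested weak closures $K_t$; weak closedness of closed convex sets, applied both to $F$ itself and to each ``coapproximation ball'' $\{y:\|y-c\|\le\|x-c\|\}$; and directedness of $\{F_t\}$, both for the finite intersection property and to absorb a fixed $c\in F_s$ into all later $F_u$. I do not expect a genuine obstacle here; the one point deserving care is checking that $d$, chosen in every tail weak closure $K_t$, automatically sits inside $F$ and inside each relevant coapproximation ball --- which is exactly what the weak closedness of those sets delivers.
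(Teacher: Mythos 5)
Your proof is correct, and the underlying mechanism is the same as the paper's: take a best coapproximation in each $F_t$, use directedness to get the finite intersection property for the tails, use reflexivity for weak compactness to extract a cluster point, and use weak closedness of closed convex sets (the balls $\{y:\|y-c\|\le\|x-c\|\}$ and $F$ itself) to pass the coapproximation inequality and the membership in $F$ to the limit. The one genuine difference is the level at which the compactness argument runs: the paper fixes selections $P_t$ with $P_t(x)\in R_{F_t}(x)$ for \emph{all} $x$ simultaneously, views them as elements of the space $N(F_{t_o})$ of Lemma \ref{compact}, and extracts a single cluster point $P$ in the Tychonoff-compact topology of pointwise--weak convergence, so that one map $P$ serves every $x$ at once; you instead fix $x$ and take a weak cluster point of the net $d_t=P_t(x)$ in one weakly compact ball. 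Your version is more elementary --- it bypasses Lemma \ref{compact} entirely and needs only weak compactness of a single bounded closed convex set --- and it also makes explicit the verification that the cluster point lies in $F$, which the paper leaves implicit; what it gives up is the extra conclusion, used implicitly elsewhere in the paper, that the limiting object is itself a selection $P\in N(F)$ defined on all of $X$. For the statement of the lemma as given, that loss is immaterial.
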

\begin{proof}
Since $ \{F_{t}\}_{t \in T}$ is directed by $ \subset ,$ for any $ t_1, t_2 \in T$ there exists $ t_3 \in T$ such that $ F_{t_1} \cup F_{t_2} \subset F_{t_3}.$ Define a partial ordering  $ \leq $ in  $ T$ by 
$t_1 \leq t_2 $ provided $ F_{t_1} \subset F_{t_2}.$  Fix $ F_{t_o} = F_o \in  \{F_{t}\}_{t \in T}.$ Since  $\{F_{t}\}_{t \in T}$ is directed by $ \subset ,$ $ F = cl(\bigcup_{t \geq t_o} F_t).$ By the axiom of choice, for any $ t \geq t_o$ we can select 
$ P_t \in N(F_t) $ (see Lemma \ref{compact}) such that $ P_t (x) \in R_{F_t}(x)$ for any $ x \in X.$  Observe that $ \{ P_t \}_{t \geq t_o} \subset N(F_o).$ Set for any $ t \geq t_o $ $ A_t = cl(\{P_s : s \geq t\})$ where the closure is taken with respect to 
the topology in $ N(F_o)$ 
defined in Lemma \ref{compact}. Since  $\{F_{t}\}_{t \in T}$ is directed by $ \subset ,$ for any $ n \in \mathbb{N}$ and $t_1,...t_n \geq t_o$ there exists $ u \in T$ such that 
$$
\bigcap_{i=1}^n A_{t_i} \supset A_{u} \neq \emptyset.
$$
By the proof of Lemma \ref{compact}, $ \bigcap_{t \geq t_o} A_t \neq \emptyset.$ Fix $ P \in  \bigcap_{t \geq t_o} A_t.$ We show that for any $ x \in X,$ $Px \in R_F(x).$ Fix $x \in X $ and $ d \in \bigcup_{t \geq t_o} F_t.$ Then there exists $ s \geq t_o$ such that 
$ d \in F_s$ and consequently $d \in F_u $ for $ u \geq s.$ Since $ P \in A_s,$
$$
\|Px  - d \| \leq \liminf_{u \geq s} \|P_u x - d\| \leq \|x-d\|.
$$
If $ d \in F,$ then there exists a sequence $ \{d_n\} \subset \bigcup_{t \geq t_o} F_t$ such that $ \|d_n -d\| \rightarrow 0.$ By the previous reasoning, 
$$
\|Px -d\| = \lim_n \|Px -d_n \| \leq \lim_n \|x -d_n \| = \| x-d\|,
$$
as required.
\end{proof}
\begin{cor}
\label{cone}
 Let $ X$ be a reflexive Banach space and let $ F$ be a convex existence set. Define for any $ v \in F$ and $ t>0 $ $ F_t =  (1-t)v + tF $ and let $ C_v =  cl(\bigcup_{t >0} F_t).$ Then $F$ is an existence set. 
\end{cor}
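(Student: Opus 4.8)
The statement is surely intended to read ``\emph{then $C_v$ is an existence set}'' (the set $F$ is an existence set by hypothesis), and this is how I would read and prove it. The plan is to exhibit $C_v$ as the closure of an increasing union of convex existence sets and then quote Lemma~\ref{dir1}.

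First I would rewrite the sets $F_t$ in the more convenient form $F_t=v+t(F-v)$, valid for every $t>0$. From this it is immediate that each $F_t$ is convex, being an invertible affine image of the convex set $F$, and that each $F_t$ is an existence set: by Lemma~\ref{basicp} the dilate $tF$ is an existence set, and $F_t=(tF)+(1-t)v$ is a translate of it, hence again an existence set by the same lemma.

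Second, I would check the monotonicity $F_s\subset F_t$ whenever $0<s\le t$. Writing both sets as $v+s(F-v)$ and $v+t(F-v)$, this reduces to $\tfrac{s}{t}(F-v)\subset F-v$. Now $F-v$ is convex and, because $v\in F$, it contains $0$; hence for every $w\in F-v$ we have $\tfrac{s}{t}w=\tfrac{s}{t}w+\bigl(1-\tfrac{s}{t}\bigr)\cdot 0\in F-v$. Thus $\{F_t\}_{t>0}$, ordered by the usual order of the parameter $t$, is a chain of convex existence sets, in particular a family directed by inclusion, and $\bigcup_{t>0}F_t$ is this increasing union.

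Finally, I would apply Lemma~\ref{dir1} (here the reflexivity of $X$ is used) to the directed family $\{F_t\}_{t>0}$ to conclude that $C_v=cl\bigl(\bigcup_{t>0}F_t\bigr)$ is an existence set. The only step requiring any idea is the monotonicity $F_s\subset F_t$, which is exactly the one-line convexity argument above exploiting $0\in F-v$; the remaining steps are bookkeeping with Lemmas~\ref{basicp} and~\ref{dir1}.
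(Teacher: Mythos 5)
Your proposal is correct and follows essentially the same route as the paper: each $F_t$ is an existence set by Lemma~\ref{basicp}, the family is directed by inclusion, and Lemma~\ref{dir1} finishes the argument. You rightly read the conclusion as ``$C_v$ is an existence set'' (which is how the corollary is used in the proof of Theorem~\ref{main}), and your explicit verification of the monotonicity $F_s\subset F_t$ via $0\in F-v$ supplies a detail the paper merely asserts.
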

\begin{proof}
By Lemma \ref{basicp}, $F_t$ is an existence set for any $ t>0.$ Since $F_t \subset F_s$ for $ t \leq s,$ by Lemma \ref{dir1} $ C_v$ is an existence set too.   
\end{proof}
\begin{lem}
 \label{dir2}
 Let $X$ be a reflexive Banach space. Let $ \{F_{t}\}_{t \in T}$ be a family of convex existence sets directed by $ \supset .$ If $ F = \bigcap_{t \in T} F_t \neq \emptyset,$ then $F$ is also an existence set.  
\end{lem}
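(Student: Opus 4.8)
\noindent The plan is to imitate the proof of Lemma \ref{dir1}, with the intersection $F=\bigcap_{t\in T}F_t$ playing the role of the closure of the union there; the set-up is in fact slightly more comfortable, since every $d\in F$ belongs to \emph{all} the sets $F_t$ and $F$ sits inside each $F_t$. First, order $T$ by $t_1\preceq t_2$ iff $F_{t_2}\subset F_{t_1}$; since $\{F_t\}_{t\in T}$ is directed by $\supset$, $(T,\preceq)$ is a directed set. The one delicate point is the very first step: for each $t\in T$ one needs a single map $P_t\in N(F_t)$ (with $C=X$, which is locally weakly compact because $X$ is reflexive) such that $P_t(x)\in R_{F_t}(x)$ for every $x\in X$. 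This is supplied by Theorem \ref{existence1}, whose hypothesis I would verify as follows: given $x\in X$, choose $p\in R_{F_t}(x)$ (nonempty since $F_t$ is an existence set); then the map that equals the identity off $x$ and sends $x$ to $p$ lies in $N(F_t)$ (the only nontrivial inequality, at the point $x$, is precisely $\|y-p\|\le\|y-x\|$ for $y\in F_t$, which is the defining property of $p$) and carries $x$ into $F_t$.

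Having the $P_t$, note that $F\subset F_t$ forces $N(F_t)\subset N(F)$, so $\{P_t\}_{t\in T}\subset N(F)$, and $N(F)$ is compact in the topology $\tau$ of pointwise-weak convergence by Lemma \ref{compact} (this uses $F\neq\emptyset$). For each $t$ put $A_t=cl(\{P_s:s\succeq t\})$, the closure taken in $\tau$; each $A_t$ is a nonempty $\tau$-closed, hence $\tau$-compact, subset of $N(F)$. Directedness of $(T,\preceq)$ gives, for any $t_1,\dots,t_n$, an upper bound $u$ with $A_u\subset\bigcap_{i=1}^n A_{t_i}$, so $\{A_t\}_{t\in T}$ has the finite intersection property and, by compactness, $\bigcap_{t\in T}A_t\neq\emptyset$. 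Fix any $P$ in this intersection.

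It then remains to show that $Px\in R_F(x)$ for every $x\in X$, i.e. that $Px\in F$ together with $\|Px-d\|\le\|x-d\|$ for all $d\in F$. Fix $x\in X$ and $t\in T$; since $P\in A_t$, there is a net $(P_{s_\gamma})$ with $s_\gamma\succeq t$ and $P_{s_\gamma}(x)\to Px$ weakly. For the inequality: any $d\in F$ lies in every $F_{s_\gamma}$, so $\|P_{s_\gamma}(x)-d\|\le\|x-d\|$ because $P_{s_\gamma}(x)\in R_{F_{s_\gamma}}(x)$, and weak lower semicontinuity of the norm passes this to the limit. For membership: the values $P_{s_\gamma}(x)$ all lie in $F_{s_\gamma}\subset F_t$, and $F_t$ is convex and closed (being an existence set it is automatically closed), hence weakly closed, so $Px\in F_t$; as $t$ was arbitrary, $Px\in\bigcap_{t\in T}F_t=F$.

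The main obstacle, as indicated, is the opening move: producing for a single convex existence set $F_t$ in a reflexive space one map in $N(F_t)$ that realizes $R_{F_t}$ simultaneously at all points. After that, the argument is exactly the compactness-and-finite-intersection bookkeeping of Lemma \ref{dir1}, streamlined here by the fact that $F\subset F_t$ for every $t$.
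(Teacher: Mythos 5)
Your argument is correct and follows essentially the same route as the paper's proof: select $P_t\in N(F_t)\subset N(F)$ for each $t$, use compactness of $N(F)$ in the pointwise-weak topology and the finite intersection property of the sets $A_t$ to extract a cluster point $P$, then obtain the coapproximation inequality from weak lower semicontinuity of the norm and membership $Px\in F$ from weak closedness of the convex closed sets $F_t$ (the paper phrases this last step as a contradiction via the Mazur theorem, you argue it directly). Your detour through Theorem \ref{existence1} to build $P_t$ is harmless but unnecessary: any pointwise selection $x\mapsto p_x\in R_{F_t}(x)$ already lies in $N(F_t)$ by the very definition of $R_{F_t}$, which is what the paper's appeal to the axiom of choice amounts to.
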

\begin{proof}
Since $ \{F_{t}\}_{t \in T}$ is directed by $ \supset ,$ for any $ t_1, t_2 \in T$ there exists $ t_3 \in T$ such that $ F_{t_1} \cap F_{t_2} \supset F_{t_3}.$ Define a partial ordering $ \leq $ in  $ T$ by 
$t_1 \leq t_2 $ provided $ F_{t_1} \supset F_{t_2}.$  By the exiom of choice, for any $ t \in T$ we can select 
$ P_t \in N(F_t) $ such that $ P_t (x) \in R_{F_t}(x)$ for any $ x \in X.$  Observe that $ \{ P_t \}_{t \in T} \subset N(F).$ Set for any $ t \geq t_o $ $ A_t = cl(\{P_s : s \geq t\})$ where the closure is taken with respect to the topology in $ N(F)$ 
defined in Lemma \ref{compact}. Since  $\{F_{t}\}_{t \in T}$ is directed by $ \supset ,$ for any $ n \in \mathbb{N}$ and $t_1,...t_n \geq t_o$ there exists $ u \in T$ such that 
$$
\bigcap_{i=1}^n A_{t_i} \supset A_{u} \neq \emptyset.
$$
By the proof of Lemma \ref{compact}, $ \bigcap_{t \in T} A_t \neq \emptyset.$ Fix $ P \in  \bigcap_{t \in T} A_t.$ We show that for any $ x \in X,$ $Px \in R_F(x).$ Fix $x \in X $ and $ d \in F.$ Hence 
$ d \in F_s$ for any $s \in T$ and consequently
$$
\|Px  - d \| \leq \liminf_{s} \|P_s x - d\| \leq \|x-d\|.
$$
To finish the proof we need to show that $ Px \in F$ for any $ x \in X.$ Assume on the contrary that there exists $x_o \in X$ such that $Px_o \notin F.$ Then there exists $ t_o \in T$ such that $ Px_o \notin F_{t_o}.$ Hence $ Px_o \notin F_t $ for any $ t \geq t_o.$
Since $ P_t x_o \in F_t \subset F_{t_o}$ for $ t \geq t_o,$ by the Mazur Theorem $Px_o \in F_{t_o},$ which is a contradiction. 
\end{proof}
\section*{Section 2}
Now we state the main result of this paper.
\begin{thm}
\label{main}
Let $X$ be a reflexive, separable Banach space. Let $ F \subset X$ be a nonempty, closed, bounded and convex existence set with nonempty interior in $X.$. Then $F$ is an intersection of a countable family of contractive half-spaces. 
\end{thm}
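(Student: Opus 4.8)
The plan is to translate $F$ so that the origin lies in its interior, to extract a countable dense set of smooth boundary points of $F$, and then to show that the closed half-spaces supporting $F$ at these points are contractive and have intersection exactly $F$.

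By Lemma \ref{basicp} we may assume $0\in int(F)$, with $F$ still closed, bounded, convex and an existence set. Let $f$ be the gauge of $F$ from Lemma \ref{mink}, so that $f$ is convex, continuous and positively homogeneous, $F=\{x:f(x)\le 1\}$, $\sigma(F)=\{x:f(x)=1\}$, and $f(x)>0$ for $x\ne 0$ since $F$ is bounded. Because $X$ is separable, a classical theorem of Mazur provides a dense set $D\subset X$ on which $f$ is G\^{a}teaux differentiable; for $x\in D\setminus\{0\}$, Euler's relation together with the subgradient inequality shows that $f'(x)$, suitably normalized, is the unique supporting functional of $F$ at the boundary point $r(x):=x/f(x)$. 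The radial retraction $r$ is continuous on $X\setminus\{0\}$ and fixes $\sigma(F)$, so $r(D\setminus\{0\})$ is a dense set of smooth points of $\sigma(F)$; I pick a countable dense subset $\{x_n\}\subset\sigma(F)$ of smooth points, with unit supporting functionals $f_n$, put $d_n:=f_n(x_n)=\sup_{v\in F}f_n(v)$ and $D_n=\{x:f_n(x)\le d_n\}$. By Lemma \ref{inters}, $F=\bigcap_n D_n$.

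Fix $n$. Since $F$ is closed, $x_n\in F$, so Corollary \ref{cone} applies with $v=x_n$: the cone $C_{x_n}=cl\bigl(\bigcup_{t>0}((1-t)x_n+tF)\bigr)=x_n+cl\bigl(\bigcup_{t>0}t(F-x_n)\bigr)$ is an existence set. I claim $C_{x_n}=D_n$. Writing $T=cl\bigl(\bigcup_{t>0}t(F-x_n)\bigr)$, the inclusion $T\subset\{y:f_n(y)\le 0\}$ is immediate from $f_n(v)\le d_n=f_n(x_n)$ for $v\in F$. Conversely, $T$ is a closed convex cone with nonempty interior (it contains a ball about $v_0-x_n$ for any $v_0\in int(F)$); if some $z$ with $f_n(z)\le 0$ were not in $T$, Hahn--Banach separation would yield a nonzero $g$ with $g\le 0$ on $T$ and $g(z)>0$, whence $g(v)\le g(x_n)$ for all $v\in F$, i.e.\ $g$ would be a supporting functional of $F$ at $x_n$ not proportional to $f_n$, contradicting smoothness of $x_n$. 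Hence $T=\{y:f_n(y)\le 0\}$ and $C_{x_n}=\{x:f_n(x)\le d_n\}=D_n$, so $D_n$ is an existence set.

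Finally, by Lemma \ref{basicp} the translate $\{x:f_n(x)\le 0\}$ is an existence set too, so, $X$ being reflexive, Theorem \ref{existence} shows $\ker f_n$ is one-complemented in $X$; then Theorem \ref{contractive} shows $D_n$ is a contractive half-space. Thus $F=\bigcap_n D_n$ is an intersection of a countable family of contractive half-spaces. The crux of the argument is the geometric step in the third paragraph — that blowing up a convex body from a smooth boundary point fills the entire supporting half-space — and this is exactly where the nonempty interior of $F$ and the smoothness furnished by separability are both used; a secondary delicate point is verifying that Mazur's theorem yields smooth points that are dense in $\sigma(F)$, for which the continuity and positive homogeneity of the gauge from Lemma \ref{mink} are essential.
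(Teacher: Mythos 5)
Your proposal is correct and follows essentially the same route as the paper's proof: normalize so that $0\in int(F)$, use the gauge from Lemma \ref{mink} and Mazur's theorem to obtain a countable dense set of smooth boundary points, identify the cones from Corollary \ref{cone} with supporting half-spaces, apply Theorem \ref{existence} together with Lemma \ref{basicp} and Theorem \ref{contractive} to make each half-space contractive, and finish with Lemma \ref{inters}. The only difference is that you spell out, via a Hahn--Banach separation argument, the identification $C_{x_n}=D_n$ that the paper asserts directly from G\^ateaux differentiability; this is a welcome added detail, not a different approach.
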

\begin{proof}
 By Lemma \ref{basicp} we can assume that $ 0 \in int(F).$ Define a function $f:X \rightarrow \mathbb{R}$ by 
 $$
 f(x) = \inf \{ t>0: x/t \in C\}.
 $$
 By Lemma \ref{mink} $f$ is a convex and continuous function. By the Mazur Theorem $f$ is Gateaux differentiable on a countable, dense subset of $X.$ Since $f(tx) = t f(x)$ for any $x \in X$ and $ t>0,$ there exists a dense countable subset  $Z= \{z_n\}$ of 
$ \sigma(C)$  such that $f$ is is Gateaux differentiable at any $z \in Z.$ Let for $n \in \mathbb{N}$ $ D_n = cl(\bigcup_{t>0}F_{t,n}), $ where $ F_{t,n} = \{ (1-t)z_n + tF\}.$ By Corollary \ref{cone}, $ D_n $ is an existence set for any $n \in \mathbb{N}.$
 Since  $f$ is is Gateaux differentiable at $z_n,$ there exists $ f_n \in S(X^*) $ and $ d_n \in \mathbb{R} $ such that $ D_n = \{ x \in X: f_n(x) \leq d_n\}$ and $ f_n(x_n)=d_n.$ (Since $ 0 \in int(C),$ $ d_n >0.)$ By Theorem \ref{existence} and Lemma \ref{basicp},
 $ker(f_n)$ is one-complemented in $X.$ By Theorem \ref{contractive}, 
 $ D_n$ is a contractive half-space. By Lemma \ref{inters}, $ F = \bigcap_{n=1}^{\infty}D_n,$ as required. 
\end{proof}
Now we will show a sufficient condition under which intersections of  countable families of contractive sets are contractive.
\begin{thm}
\label{countable}
Let $(X, \|\cdot \|) $ be a reflexive Banach space. Let $ \| \cdot \|_n$ be a sequence of strictly convex norms on $X$ such that there exists a sequence $\{s_n\}$ of nonegative numbers, $ s_n \rightarrow 0$ satisfying for any $ n\in \mathbb{N}$ 
and $x \in X$
\begin{equation}
\label{equiv}
(1-s_n) \|x \|_n \leq \|x\| \leq (1+s_n) \|x\|_n
\end{equation}
Assume that $\{F_k\} $ is a countable family of convex sets such that for any $k,n \in \mathbb{N}$ $ F_k$ is a contractive set with respect to $\| \cdot \|_n. $ Assume that $ F = \bigcap_{k \in \mathbb{N}} F_k \neq \emptyset.$ 
Then $F$ is a contractive subset of $X$ with respect to $ \| \cdot \|.$
\end{thm}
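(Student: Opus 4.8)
The plan is to produce, for every $\epsilon>0$, a retraction $P_\epsilon\colon X\to F$ that is $(1+\epsilon)$-Lipschitz for $\|\cdot\|$, and then to extract a genuine non-expansive retraction onto $F$ by a compactness argument. Note first that $F$ is convex and closed (a contractive set is an existence set, and an existence set is closed: if $x_m\in F$, $x_m\to x$ and $d\in R_F(x)$, then $\|d-x_m\|\le\|x-x_m\|\to0$, so $d=x\in F$); hence $F$ is weakly closed by the Mazur Theorem. Fix $d_0\in F$. Then $\|P_\epsilon x\|\le(1+\epsilon)\|x-d_0\|+\|d_0\|$, so the maps $P_{1/j}$ ($j\in\mathbb{N}$) take values, at each fixed $x$, in a ball that is weakly compact by reflexivity; exactly as in Lemma~\ref{compact}, the product of these balls is compact in the topology of pointwise--weak convergence, so $\{P_{1/j}\}_j$ has a cluster point $P$ in that topology. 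For $g\in S(X^*)$, $g(Px-Py)$ is a cluster value of $(g(P_{1/j}x-P_{1/j}y))_j$, which is eventually $\le(1+1/j)\|x-y\|$, so $\|Px-Py\|\le\|x-y\|$; weak closedness of $F$ gives $Px\in F$, and $P_{1/j}|_F=\mathrm{id}_F$ gives $P|_F=\mathrm{id}_F$. Thus $F$ is contractive for $\|\cdot\|$.

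It remains to build $P_\epsilon$. Fix $\epsilon>0$. Choose a schedule $k\colon\mathbb{N}\to\mathbb{N}$ in which every value occurs infinitely often and, crucially, with bounded recurrence gaps: for each $i$ there is $L_i$ with $i\in\{k(m),\dots,k(m+L_i)\}$ for all $m$. Since $s_n\to0$, choose an increasing $n\colon\mathbb{N}\to\mathbb{N}$ with $\prod_{j\ge1}\frac{1+s_{n(j)}}{1-s_{n(j)}}\le1+\epsilon$. For each $j$ let $T_j$ be a retraction of $X$ onto $F_{k(j)}$ that is non-expansive for $\|\cdot\|_{n(j)}$ (it exists since $F_{k(j)}$ is contractive for $\|\cdot\|_{n(j)}$), and set $R_0=\mathrm{id}$, $R_m=T_m\circ\cdots\circ T_1$. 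By (\ref{equiv}),
$$
\|T_j u-T_j v\|\le(1+s_{n(j)})\|T_j u-T_j v\|_{n(j)}\le(1+s_{n(j)})\|u-v\|_{n(j)}\le\tfrac{1+s_{n(j)}}{1-s_{n(j)}}\,\|u-v\|,
$$
so $R_m$ is $\prod_{j\le m}\frac{1+s_{n(j)}}{1-s_{n(j)}}\le(1+\epsilon)$-Lipschitz for $\|\cdot\|$. Taking $v=d\in F$ (so $T_j d=d$, as $F\subseteq F_{k(j)}$) the same chain gives $\|R_m x-d\|\le\big(\prod_{j=m_0+1}^m\frac{1+s_{n(j)}}{1-s_{n(j)}}\big)\|R_{m_0}x-d\|$ for $m\ge m_0$; since the tail products tend to $1$, a routine argument shows $\rho_d(x):=\lim_m\|R_m x-d\|$ exists for each $d\in F$. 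In particular $\{R_m x\}_m$ is bounded, hence has weakly convergent subsequences by reflexivity, while $R_m x\in F_{k(m)}$ for every $m$.

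The main technical point — and, I expect, the hardest step — is to show that $\{R_m x\}_m$ converges weakly to a point $P_\epsilon x\in F$; once this holds, $P_\epsilon$ is $(1+\epsilon)$-Lipschitz by weak lower semicontinuity of $\|\cdot\|$ together with the above bounds, it maps into $F$ by construction, and $P_\epsilon|_F=\mathrm{id}_F$ since $R_m d=d$ for $d\in F$. This is where the strict convexity of the norms $\|\cdot\|_{n(j)}$ must be exploited. The intended mechanism is: the bounded-gap property gives, for each $i$ and each $m$, an index $m'$ with $m\le m'\le m+L_i$ and $R_{m'}x\in F_i$; the Fej\'er-type estimate shows $\|R_{m'}x-d\|$ and $\|R_m x-d\|$ have the same limit $\rho_d(x)$ for all $d\in F$; and strict convexity of $\|\cdot\|_{n(j)}$ (with $s_{n(j)}\to0$, so these norms ``approach'' $\|\cdot\|$) should force asymptotic regularity $\|R_{m+1}x-R_m x\|\to0$, hence $\|R_{m'}x-R_m x\|\to0$, so every weak cluster point of $\{R_m x\}$ lies in each $F_i$, i.e. in $F$; a final use of strict convexity (an Opial-type uniqueness of the asymptotic center among points of $F$) pins down the cluster point and promotes subsequential weak convergence to weak convergence of the whole sequence. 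In effect one needs a convergence theorem for a cyclic sequence of non-expansive retractions, read off across the varying norms; everything else is bookkeeping with (\ref{equiv}) and the compactness already isolated in Lemma~\ref{compact}.

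An alternative organisation would fix $n$ first and prove directly that $F=\bigcap_k F_k$ is contractive for the single strictly convex norm $\|\cdot\|_n$ — writing $F$ as the intersection of the decreasing family $\bigcap_{k\le m}F_k$ and combining a contractive analogue of Lemma~\ref{dir2} with the contractivity of finite intersections — and only then let $n\to\infty$ as in the first paragraph. I would try this route first, as it cleanly separates the intersection problem from the norm approximation; but it still rests on a convergence statement for alternating non-expansive retractions in $(X,\|\cdot\|_n)$, so the essential difficulty is unchanged.
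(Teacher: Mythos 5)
Your opening and closing steps are sound and essentially coincide with the end of the paper's argument: once you have, for each $j$, a retraction of $X$ onto $F$ that is $(1+1/j)$-Lipschitz for $\|\cdot\|$, the cluster-point argument in the topology of pointwise--weak convergence (as in Lemma \ref{compact}) does produce a nonexpansive retraction onto $F$. The problem is the middle: the existence of $P_\epsilon$ is exactly where the whole difficulty lies, and your construction via the cyclic compositions $R_m=T_m\circ\cdots\circ T_1$ leaves the decisive step --- weak convergence of $\{R_m x\}$ to a point of $F$ --- unproved. The mechanisms you invoke do not close it. Asymptotic regularity $\|R_{m+1}x-R_mx\|\to 0$ is not automatic for unrelaxed compositions of nonexpansive retractions (one normally needs firmly nonexpansive or averaged maps, or a relaxation parameter); the Fej\'er-type estimate only controls $\|R_mx-d\|$ for $d\in F$ and does not by itself force consecutive iterates together; and the ``Opial-type uniqueness of the asymptotic center'' is unavailable in general, since Opial's property fails in reflexive strictly convex spaces such as $L_p[0,1]$, $p\neq 2$. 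Strict convexity of the norms $\|\cdot\|_n$ alone does not rescue this. So as written the proposal is a program, not a proof, and the program rests on a convergence theorem for cyclic nonexpansive retractions that is genuinely open in this generality.

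The paper sidesteps the convergence problem entirely. Fixing $n$, it forms the \emph{single} nonexpansive self-map $P_n=\sum_k c_k P_{k,n}$ (a convex combination of the retractions onto the $F_k$, with $c_k>0$, $\sum_k c_k=1$) and uses strict convexity of $\|\cdot\|_n$ to show $\mathrm{Fix}(P_n)=\bigcap_k F_k=F$: if $P_nx=x$, a norming functional for $x$ forces $f_n(P_{k,n}x)=\|P_{k,n}x\|_n=\|x\|_n$ for every $k$, and uniqueness of the norming point gives $P_{k,n}x=x$. It then quotes Bruck's theorem (\cite{BR1}, Theorem 2 and Lemma 3) that in this reflexive setting the fixed-point set of a nonexpansive map is a nonexpansive retract, obtaining a retraction $Q_n$ onto $F$ that is nonexpansive for $\|\cdot\|_n$, hence $\tfrac{1+s_n}{1-s_n}$-Lipschitz for $\|\cdot\|$; your final compactness step then applies verbatim. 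If you want to salvage your outline, replacing the cyclic composition by this convex combination plus Bruck's fixed-point-set theorem is the missing idea.
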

\begin{proof} 
By Lemma \ref{basicp}, we can asume that $ 0 \in F.$
Fix $ n \in \mathbb{N}.$ Let for $ k \in \mathbb{N}$ $P_{k,n}: X \rightarrow F_k$ be a contractive mapping with respect to $ \| \cdot \|_n.$ Fix a sequence of positive numbers $ \{c_k\}$ such that $ \sum_{k=1}^{\infty} c_k =1.$ 
Define $P_n : X \rightarrow X$ by 
$$
P_nx = \sum_{k=1}^{\infty} c_k P_{k,n} x.
$$
(Since  $ \sum_{k=1}^{\infty} c_k =1,$ $P_n$ is well-defined.) Now we show that $ Fix(P_n) =F.$ By definition of $ P_{k,n} , $ $ F \subset Fix(P_n).$ Now asume that $ x \in Fix(P_n) \setminus \{0\}.$ Since $ 0 \in F, $ 
for any $ k \in \mathbb{N} $ $\| P_{k,n}(x)\|_n \leq \|x \|_n.$ Fix $f_n \in S(X^*)$ (with respect to $ \| \cdot \|_n)$ such that $ f_n(x) = \|x\|_n.$ Notice that 
$$
\|x \|_n = f(x) = f_n(\sum_{k=1}^{\infty} c_k P_{k,n} x) = \sum_{k=1}^{\infty} c_k f_n(P_{k,n} x) \leq \sum_{k=1}^{\infty} c_k \|P_{k,n} x \|_n \leq \|x\|_n.
$$
Hence for any $ k \in \mathbb{N}$ $ \|P_{k,n}x \|_n = \|x\|_n$ and $ f_n(P_{k,n}x) = \|x\|_n.$ Hence for any $ k \in \mathbb{N}$ $ \frac{P_{k,n}x}{\|P_{k,n}x \|_n}$ is a norming point for $f_n.$ Since $(X \| \cdot \|_n)$ is strictly convex, 
$f_n$ has exactly one norming point with respect to $ \| \cdot \|_n.$ Hence for any $ k \in \mathbb{N},$ $ P_{k,n}x = x,$ which shows that $ x \in F.$ By (\cite{BR1}, Theorem 2 and Lemma 3),  $F = Fix(P_n)$ is a contractive subset of  $(X, \|\cdot \|_n).$ 
Let $ Q_n:X \rightarrow F$ be a contractive mapping with respect to $ \| \cdot \|_n.$ Define for any $ M \geq 1 $
$$
N_M(F) = \{ f:X \rightarrow X: \| y - f(x) \| \leq M \|y-x\| \hbox{ for any } y\in F, x \in X \}.
$$
Reasoning as in Lemma \ref{compact}, we can show that $N_M(F)$ is a compact set with respect to the topology of weak pointwise convegence. 
Put 
$$ 
M = sup \{ \frac{1+s_n}{1-s_n} : n \in \mathbb{N}\}.
$$ 
Notice that for any $x \in X$ and $ y \in F,$  
$$
\|Q_n x - y \| \leq (1+s_n) \|Q_nx-y \|_n \leq (1+s_n) \| x-y \|_n \leq \frac{1+s_n}{1-s_n} \|x-y\| \leq M \|x-y\|.
$$
Hence $Q_n \in N_M(F)$ for any $ n \in \mathbb{N}.$ Renasoning as in Lemma \ref{maximal}, we can show that $ \bigcap_{n\in N} A_n \neq \emptyset , $ where $ A_n = cl(\{ Q_k: k \geq n \}).$ 
Fix $ Q \in \bigcap_{n\in N} A_n.$ We show that $Q$ is a contractive mapping from $ X $ onto $F$ with respect to $ \| \cdot \|.$ 
Notice that for any $ x , y \in X$
$$
\|Q_n x - Q_n y\|_n \leq \|x-y\|_n \leq \frac{\|x-y\|}{1-s_n}. 
$$
Since $ s_n \rightarrow 0,$ 
$$
\liminf_n \frac{\|Q_n x - Q_ny\|_n}{1-s_n}  \leq \|x-y\|.
$$
Since $ Q \in \bigcap_{n\in N} A_n,$ this shows that 
$$
\|Qx - Qy\| \leq \liminf \|Q_n x - Q_n y\| \leq \liminf_n \frac{\|Q_n x - Q_n y\|_n}{1-s_n} \leq \|x-y\|,
$$
as required.
Moreover, since for any $ n \in \mathbb{N}$ $Q_nx \in F$ and $F$ is convex, by the Mazur Theorem, $ Qx \in F.$ Since for any $n \in \mathbb{N}$ and $x \in F, $ $ Q_nx =x, $ $Qx=x.$ The proof is complete.
\end{proof}
\begin{rem}
\label{r1}
Observe that, in general, the countable intersection of contractive sets need not to be even an existence set. In (\cite{KL}, Example 2.10), it was shown that $ F = ker(f_1) \cap ker(f_2) \subset l_{\infty}^{(4)}$ is not an existence set.
Here $ f_1 = (1,0,0,0) $ and $f_2 = (1/2, 1/6,1/6,1/6).$ (We understand that $ f_i(x) = \sum_{j=1}^4 x_jf_j,$ for $ x \in   l_{\infty}^{(4)}$ and $i=1,2.)$ Observe that $ F= \bigcap_{n=1}^{\infty} F_n,$ where
$$
F_n=\{ x \in l_{\infty}^{(4)}: |f_i(x)| \leq 1/n, i=1,2\}. 
$$ 
By Lemma \ref{dir2} and Theorem \ref{basicp} for any $ n \in \mathbb{N} $ $F_n$ is not an existence set. Observe that for any $ n \in \mathbb{N}$
$F_n= \bigcup_{k=1}^{\infty} F_{n,k},$ where 
$$
F_{n,k} = \{ x \in F_n : |x_i| \leq k \hbox{ for } k=1,...,4\}.
$$
By Lemma \ref{dir1} and Theorem \ref{basicp}, for any $ k,n \in \mathbb{N},$ $F_{n,k}$ is not an existence set too.
\end{rem}
An immediate consequence of Theorem \ref{main} and Theorem \ref{countable} is
\begin{thm}
\label{s}
Let $(X, \| \cdot \|)$ and $F$ be as in Theorem \ref{main}. Assume that $ (X, \|\cdot \|)$ satisfies the assumptions of Theorem \ref{countable}. Then the following conditions are equivalent:

a. $ F$ is an existence set;

b. $F$ is a contractive set; 

c. $F$ is an intersection of a countable number of contractive half-spaces.
\end{thm}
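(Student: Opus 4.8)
My plan is to prove the three conditions equivalent through the cycle (b)$\,\Rightarrow\,$(a)$\,\Rightarrow\,$(c)$\,\Rightarrow\,$(b). The first arrow is the trivial one and needs none of the extra hypotheses: every contractive set is an existence set, since for a nonexpansive projection $P:X\to F$ one has $Px\in R_F(x)$ for all $x\in X$, as already observed in the Introduction. The second arrow, (a)$\,\Rightarrow\,$(c), is exactly Theorem \ref{main}: the standing assumptions on $X$ (reflexive and separable) and on $F$ (nonempty, closed, bounded, convex, with nonempty interior) are precisely its hypotheses, and it produces a countable family $\{D_n\}$ of contractive half-spaces with $F=\bigcap_{n\in\mathbb{N}}D_n$.

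The real content is the third arrow, (c)$\,\Rightarrow\,$(b). I would write $F=\bigcap_{k=1}^{\infty}H_k$ with each $H_k=\{x\in X:f_k(x)\le d_k\}$, $f_k\in X^*\setminus\{0\}$, a contractive subset of $(X,\|\cdot\|)$, and, after a translation justified by Lemma \ref{basicp}, normalize so that $d_k=0$. Since a contractive set is an existence set and $X$ is reflexive, Theorem \ref{existence} gives that $\ker f_k$ is one-complemented in $(X,\|\cdot\|)$. I would then invoke the hypothesis that $(X,\|\cdot\|)$ satisfies the assumptions of Theorem \ref{countable}, fix the sequence $\{\|\cdot\|_n\}$ of strictly convex norms with $(1-s_n)\|x\|_n\le\|x\|\le(1+s_n)\|x\|_n$ and $s_n\to0$, and apply Theorem \ref{countable} to the convex sets $F_k:=H_k$: one has $\bigcap_kH_k=F\ne\emptyset$, so provided each $H_k$ is contractive with respect to every $\|\cdot\|_n$, Theorem \ref{countable} yields at once that $F$ is a contractive subset of $(X,\|\cdot\|)$, that is, (b). Closing the cycle with (b)$\,\Rightarrow\,$(a)$\,\Rightarrow\,$(c) then finishes the argument.

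The one step that is not mere bookkeeping — and the point I expect to be the main obstacle — is verifying the proviso used above: that a half-space which is contractive for $\|\cdot\|$ is contractive for each auxiliary norm $\|\cdot\|_n$. By Theorem \ref{contractive}, applied in the Banach space $(X,\|\cdot\|_n)$, this is equivalent to $\ker f_k$ being one-complemented in $(X,\|\cdot\|_n)$ for every $n$, and one-complementedness of a hyperplane is not preserved under renorming in general, so it cannot simply be inherited from the case of $\|\cdot\|$. The route I would take is to avoid arbitrary decompositions and prove (a)$\,\Rightarrow\,$(b) directly through the explicit half-spaces furnished by Theorem \ref{main}, namely $D_n=cl\big(\bigcup_{t>0}((1-t)z_n+tF)\big)$ with $z_n$ a point of Gateaux differentiability of the Minkowski functional of $F$ and $f_n$ its derivative at $z_n$. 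I would re-run the cone argument behind Corollary \ref{cone} and the projection construction of Theorems \ref{existence} and \ref{contractive} inside each $(X,\|\cdot\|_n)$ — each of which is again reflexive and separable, with $F$ still closed, bounded and convex — and track the norm estimate $(1-s_n)\|x\|_n\le\|x\|\le(1+s_n)\|x\|_n$ to see that the retractions onto $D_n$ produced this way are nonexpansive for $\|\cdot\|_n$, or at worst have Lipschitz constants tending to $1$, a loss that is then absorbed by the compactness-and-$\liminf$ mechanism already present in the proof of Theorem \ref{countable}. Once $\{D_n\}$ is seen to satisfy the hypotheses of Theorem \ref{countable}, that theorem yields (b), and the equivalence is complete.
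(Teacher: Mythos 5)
Your cycle (b)$\Rightarrow$(a)$\Rightarrow$(c)$\Rightarrow$(b) is exactly the paper's proof, which consists of precisely these three citations: Theorem \ref{main} for (a)$\Rightarrow$(c), Theorem \ref{countable} for (c)$\Rightarrow$(b), and the observation that any contractive set is an existence set for (b)$\Rightarrow$(a). Up to that point you match the paper word for word.

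The divergence is your final paragraph, and there the proposal does not hold up. You are right that Theorem \ref{countable} requires each half-space to be contractive with respect to \emph{every} auxiliary norm $\|\cdot\|_n$, and right that this is not inherited from contractivity with respect to $\|\cdot\|$: it amounts to $ker(f_k)$ being one-complemented in each $(X,\|\cdot\|_n)$, which is a genuine extra condition that can fail outright (in $L_p$, $p\neq 2$, no hyperplane is one-complemented, cf.\ Remark \ref{r3}). The paper simply treats this as part of the hypothesis ``$(X,\|\cdot\|)$ satisfies the assumptions of Theorem \ref{countable}'' and verifies it by hand in the applications (Theorems \ref{l1} and \ref{linf}, where Bohnenblust and Baronti--Papini give that the relevant kernels are one-complemented in every $l_p^{(n)}$ simultaneously). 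Your attempted repair --- rerunning the constructions inside each $(X,\|\cdot\|_n)$ and absorbing Lipschitz constants $\frac{1+s_n}{1-s_n}\to 1$ into the $\liminf$ --- fails for two reasons. First, the half-spaces $D_k$ of Theorem \ref{main} are determined by $F$ alone (supporting functionals at Gateaux-smooth points of the Minkowski functional of $F$), so rerunning the construction in $(X,\|\cdot\|_n)$ yields the same functionals $f_k$, and nothing forces $ker(f_k)$ to be one-complemented for $\|\cdot\|_n$. Second, the mechanism of Theorem \ref{countable} cannot tolerate retractions onto the $F_k$ that are merely $(1+\varepsilon_n)$-Lipschitz for $\|\cdot\|_n$: the identification $Fix(P_n)=F$ uses $\|P_{k,n}x\|_n\le\|x\|_n$ exactly, combined with strict convexity, and the subsequent appeal to Bruck's theorem requires $P_n$ to be nonexpansive for $\|\cdot\|_n$; a factor $1+\varepsilon_n$ at that stage destroys both steps and is not recovered by the later $\liminf$ over $n$, which only controls the passage from $\|\cdot\|_n$ back to $\|\cdot\|$. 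So the proviso you isolated must be assumed (or verified case by case, as in Theorems \ref{l1} and \ref{linf}), not proved; with that reading, the first half of your argument already is the paper's proof and the last paragraph should be dropped.
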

\begin{proof}
Assume that $F$ is an existence set. By Theorem \ref{main},  $F$ is an intersection of a countable number of contractive half-spaces. By Theorem \ref{countable}, $F$ is a contractive set. 
Since each existence set is contractive, the proof is complete.
\end{proof}
Now we present two applications of Theorem \ref{s}.
\begin{thm}
\label{l1}
Let $ X= l_1^{(n)}$ and let $F \subset X$ be a convex bounded set with nonempty interior in $X.$ Then the following conditions are equivalent

a. $ F$ is an existence set;

b. $F$ is a contractive set; 

c. $F$ is an intersection of a countable number of contractive half-spaces.
\end{thm}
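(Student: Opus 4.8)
The plan is to deduce the result from Theorem \ref{s}: since $l_1^{(n)}$ is finite dimensional it is reflexive and separable, so what is left is to exhibit a sequence of strictly convex norms realizing the hypotheses of Theorem \ref{countable} and --- crucially --- with respect to which the half-spaces produced by Theorem \ref{main} stay contractive. I take $F$ to be closed throughout (without closedness the statement fails already in $l_1^{(1)}=\mathbb{R}$, where $F=(0,1)$ satisfies (a) but neither (b) nor (c)). Granting this, "(a) $\Rightarrow$ (c)" is exactly Theorem \ref{main} and "(b) $\Rightarrow$ (a)" is the remark from the Introduction that a contractive set is an existence set, so the whole content is "(c) $\Rightarrow$ (b)".

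The key fact I would establish is a description of the one-complemented hyperplanes of $l_1^{(n)}$: for $0\neq f$ with $f(x)=\sum_j a_j x_j$, the subspace $ker(f)$ is one-complemented in $l_1^{(n)}$ if and only if $f$ has at most two nonzero coefficients. For "if", if $a_j=0$ for $j\notin\{p,q\}$, take a norm-one projection of the coordinate subspace $\mathbb{R}^{\{p,q\}}\cong l_1^{(2)}$ onto the line $ker(f)\cap\mathbb{R}^{\{p,q\}}$ (it exists by Hahn--Banach) and extend it by the identity on the remaining coordinates; since $l_1^{(n)}=\mathbb{R}^{\{p,q\}}\oplus_1\mathbb{R}^{\{p,q\}^c}$, the extension has norm one. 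For "only if", let $P=I-Q$ be a norm-one projection onto $ker(f)$ with $Qx=f(x)w$, $f(w)=1$, and suppose $a_1,a_2,a_3\neq 0$; reflecting coordinates (an isometry of $l_1^{(n)}$) we may assume every nonzero $a_j$ is positive, and then $\|Pe_j\|_1=\|e_j-a_jw\|_1\le1$ forces $w_j\ge0$ for every $j$ with $a_j\neq0$. As all terms of $\sum_j a_jw_j=1$ are then nonnegative, each $a_jw_j\le1$, and $\|Pe_j\|_1\le1$ reduces to $\sum_i|w_i|\le2w_j$ for $j=1,2,3$; adding these three inequalities and using $w_1+w_2+w_3\le\sum_i|w_i|$ gives $\sum_i|w_i|\le0$, so $w=0$, contradicting $f(w)=1$. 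Combining this with Theorem \ref{existence} and Lemma \ref{basicp}, and the fact that a contractive set is an existence set, shows that every contractive half-space of $l_1^{(n)}$ has the form $\{x:ax_p+bx_q\le d\}$.

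Next I would put $\| \cdot \|_m:=\| \cdot \|_{p_m}$ with $p_m=1+\frac{1}{m}$. Each $\| \cdot \|_{p_m}$ is strictly convex, and $\|x\|_{p_m}\le\|x\|_1\le n^{1/(m+1)}\|x\|_{p_m}$, so condition (\ref{equiv}) holds with $s_m=n^{1/(m+1)}-1\to0$; thus $l_1^{(n)}$ meets the assumptions imposed on $X$ in Theorem \ref{countable}. Moreover each hyperplane $\{x:ax_p+bx_q=0\}$ is one-complemented in every $l_{p_m}^{(n)}$: take a norm-one projection of the coordinate subspace $\mathbb{R}^{\{p,q\}}$ onto the line $\{x:ax_p+bx_q=0\}\cap\mathbb{R}^{\{p,q\}}$ and extend it by the identity, using this time that $l_{p_m}^{(n)}=\mathbb{R}^{\{p,q\}}\oplus_{p_m}\mathbb{R}^{\{p,q\}^c}$, so the extension $P$ satisfies $\|Px\|_{p_m}^{p_m}\le\|x\|_{p_m}^{p_m}$. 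Hence, by Theorem \ref{contractive} applied in $l_{p_m}^{(n)}$, every contractive half-space of $l_1^{(n)}$ is a contractive subset of $(X,\| \cdot \|_{p_m})$ as well, for every $m$.

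Finally, for "(c) $\Rightarrow$ (b)" I would write $F=\bigcap_k D_k$ with each $D_k$ a contractive half-space of $l_1^{(n)}$; by the previous two paragraphs each $D_k$ is a convex set which is contractive with respect to every $\| \cdot \|_{p_m}$, and $F\neq\emptyset$ since $int(F)\neq\emptyset$. Theorem \ref{countable} then gives that $F$ is contractive in $(X,\| \cdot \|_1)=l_1^{(n)}$, and the cycle (a) $\Rightarrow$ (c) $\Rightarrow$ (b) $\Rightarrow$ (a) is complete. I expect the main obstacle to be the "only if" half of the hyperplane characterization, together with the observation that the $l_p$ norms with $p\downarrow1$ are the right strictly convex approximations --- this is exactly what ensures that the contractive half-spaces coming out of Theorem \ref{main} remain contractive in the renormings, which is the hypothesis Theorem \ref{countable} genuinely needs.
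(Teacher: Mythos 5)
Your proof is correct and follows essentially the same route as the paper's: Theorem \ref{main} gives (a) $\Rightarrow$ (c), and (c) $\Rightarrow$ (b) is obtained by showing that every contractive half-space of $l_1^{(n)}$ is cut out by a functional with at most two nonzero coordinates, hence stays contractive in $l_{p_m}^{(n)}$ for $p_m \downarrow 1$, so that Theorem \ref{countable} applies; the only difference is that you prove the hyperplane characterization and the one-complementedness in $l_p^{(n)}$ from scratch (correctly), where the paper simply cites Blatter--Cheney, Bohnenblust and Baronti--Papini. One small correction: your parenthetical example is false --- $F=(0,1)\subset\mathbb{R}$ is not an existence set, since $R_F(1)=\emptyset$, and in fact every existence set (and every contractive set) is automatically closed, so assuming closedness costs nothing.
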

\begin{proof}
 Assume that $F$ is a convex existence set in $X.$ By Theorem \ref{main}, $F$ is an intersection of a countable family $ \{F_k\} $ of half-spaces. By Theorem \ref{existence}, each $F_k$ is determined by $ f_k \in S(X^*)$ such that $ker(f_k)$ 
 is one-complemented in $X.$ By\cite{BC}, each $f_k$ has at most two coordinates different from zero. By \cite{BO} and \cite{BP},  for each $k,$ $ker(f_k)$ is also one-complemented in $ l_p^{(n)}$ for $ p\geq 1. $ Fix a sequence $ p_l > 1$ $ p_l \rightarrow 1.$
 By Theorem \ref{contractive}, for any $k,l$  $ F_k$ is a contractive subset of $l_{p_l}^{(n)}.$  Applying Theorem \ref{s} to $\| \cdot \|_l = \| \cdot \|_{p_l}$ and $ \{F_k\}$ we get the result.  
\end{proof}
\begin{thm}
\label{linf}
Let $ X= l_{\infty}^{(n)}$ and let $F \subset X$ be a convex set.  Assume that $F = \bigcap_{k=1}^{\infty} F_k,$ where for each $k$ $F_k$ is a half space determined by $f_k \in S(X^*) $ having at most two coordinates 
different from zero. Then the following conditions are equivalent

a. $ F$ is an existence set;

b. $F$ is a contractive set; 

\end{thm}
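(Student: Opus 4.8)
The plan is to follow the proof of Theorem \ref{l1} in spirit, only approximating the $\ell_\infty^{(n)}$-norm from below by $\ell_p$-norms with $p\uparrow\infty$ instead of the $\ell_1^{(n)}$-norm by $\ell_p$-norms with $p\downarrow 1$, and then to invoke Theorem \ref{countable}. Since every contractive set is an existence set, (b) $\Rightarrow$ (a) is trivial, so only (a) $\Rightarrow$ (b) requires an argument. Assume $F$ is an existence set, so in particular $F\neq\emptyset$; replacing $f_k$ by $-f_k$ if necessary we may write $F_k=\{x\in\ell_\infty^{(n)}:f_k(x)\le d_k\}$ for some $d_k\in\mathbb{R}$, where $f_k$ has at most two nonzero coordinates, say those indexed by a set $S_k$ with $|S_k|\le 2$.

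The key step is to show that $\ker f_k$ is one-complemented in $\ell_p^{(n)}$ for every $p\in(1,\infty)$. When $|S_k|\le 1$ the kernel is a coordinate hyperplane and the claim is clear. When $|S_k|=2$ one uses the elementary fact that in any two-dimensional normed space every line through the origin is the range of a norm-one projection: if $\spn u$ is that line, take a supporting functional $g$ of $u$; since $g(u)\neq 0=f_k(u)$ the functionals $g$ and $f_k$ are linearly independent on that plane, so one can choose $v$ with $f_k(v)=1$ and $g(v)=0$, and then $x\mapsto x-f_k(x)v$ is a projection onto $\spn u$ of norm $\le\|g\|=1$, because for $x=su+tv$ one has $\|x-f_k(x)v\|=|s|\,\|u\|=|g(x)|\le\|x\|$. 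Splitting the coordinates as $\ell_p^{(n)}=\ell_p^{(|S_k|)}\oplus_p\ell_p^{(n-|S_k|)}$ according to whether a coordinate lies in $S_k$, and taking the direct sum of this projection with the identity, one obtains a norm-one projection of $\ell_p^{(n)}$ onto $\ker f_k$. (Alternatively, one may simply quote \cite{BO} and \cite{BP}.) Hence, by Theorem \ref{contractive}, each $F_k$ is a contractive subset of $\ell_p^{(n)}$ for every $p\in(1,\infty)$.

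Now fix $p_m>1$ with $p_m\to\infty$ and set $\|\cdot\|_m=\|\cdot\|_{p_m}$. Each $\|\cdot\|_m$ is strictly convex, every $F_k$ is convex and, by the previous step, contractive with respect to every $\|\cdot\|_m$, and $F=\bigcap_k F_k\neq\emptyset$. Since $\|x\|_\infty\le\|x\|_{p_m}\le n^{1/p_m}\|x\|_\infty$, the norm equivalence $(1-s_m)\|x\|_m\le\|x\|_\infty\le(1+s_m)\|x\|_m$ required by Theorem \ref{countable} holds with $s_m=1-n^{-1/p_m}\to 0$. Applying Theorem \ref{countable} with $(X,\|\cdot\|)=(\ell_\infty^{(n)},\|\cdot\|_\infty)$ then yields that $F$ is a contractive subset of $\ell_\infty^{(n)}$, completing the proof.

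The argument involves no genuine analytic difficulty once Theorems \ref{contractive} and \ref{countable} are in hand; the one place where the hypothesis on the $f_k$ is indispensable is the one-complementedness of $\ker f_k$ in all the intermediate spaces $\ell_p^{(n)}$, and Remark \ref{r1} shows that without the restriction to at most two nonzero coordinates the conclusion can fail altogether. The only technicality worth noting is that the equivalence constants $s_m$ tend to $0$ uniformly in $x$, which is automatic here since the dimension $n$ is fixed and $n^{1/p_m}\to 1$.
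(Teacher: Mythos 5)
Your proposal is correct and follows essentially the same route as the paper: the paper's proof also fixes $p_l\to+\infty$, invokes \cite{BO} and \cite{BP} together with Theorem \ref{contractive} to make each $F_k$ contractive in $\ell_{p_l}^{(n)}$, and then concludes by the argument of Theorem \ref{l1}, i.e.\ by Theorem \ref{countable}. Your self-contained two-dimensional argument for the one-complementedness of $\ker f_k$ in $\ell_p^{(n)}$ is a correct (and welcome) substitute for the citation, and your explicit choice $s_m=1-n^{-1/p_m}$ verifies the norm-equivalence hypothesis that the paper leaves implicit.
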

\begin{proof}
Fix a sequnece $p_l  \rightarrow +\infty. $ By \cite{BO} and \cite{BP},  and Theorem \ref{contractive}, for any $k,l$  $ F_k$ is a contractive subset of $l_{p_l}^{(n)}.$ Reasoning as in Theorem \ref{l1} we get the result. 
\end{proof}
Now we present a sufficient condition (in which strict convexity is not assumed), under which the intersection of a countable family of contractive half-spaces is a contractive set.
\begin{thm}
 \label{suff}
Let $X$ be a reflexive Banach space satisfying (CFPP) (see Def. \ref{fixed}). Assume that $F = \bigcap_{k=1}^{\infty} F_k,$ where for each $k$ $F_k$ is a contractive half-space determined by $f_k \in S(X^*).$ 
(By Theorem \ref{existence}, for each $k$ $ ker(f_k) $ is one complemented in $X.)$
Let $ P_k $ be a norm one projection from $ X $ onto $ker(f_k).$ (By \cite{BC}, for any $x \in X$ $ P_kx = x - f_k(x) y_k$ where $ y_k \in X $ satisfies $ f_k(y_k)=1.)$ Assume furthermore that there exists a sequence $ \{ d_k\}$ of nonegative numbers such that 
$ F_k = \{ x \in X: f_k(x) \leq d_k\}.$ 
If $ 0 \notin cl(conv(\{y_k\}_{k \in \mathbb{N}})), $ then $F$ is a contractive set.   
\end{thm}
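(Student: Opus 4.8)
The plan is to realize $F$ as the fixed‑point set of a single nonexpansive self‑map of $X$, and then to invoke the retraction theorem of \cite{BR1} exactly as in the final step of the proof of Theorem \ref{countable}.

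First I would record the contractive retraction onto each half‑space. Using the norm‑one projection $P_k$, the proof of Theorem \ref{contractive} (combined with Lemma \ref{basicp} to translate the hyperplane $\{f_k=d_k\}$ onto $ker(f_k)$) shows that the map $R_k\colon X\to F_k$ given by $R_kx=x$ when $f_k(x)\le d_k$ and $R_kx=x-(f_k(x)-d_k)y_k$ when $f_k(x)>d_k$ is a contractive retraction onto $F_k$; equivalently $R_kx=x-\phi_k(x)y_k$, where $\phi_k(x)=\max\{f_k(x)-d_k,0\}\ge 0$. Two elementary estimates are needed. Since $\|f_k\|=1$ and $d_k\ge 0$ we have $\phi_k(x)\le\max\{f_k(x),0\}\le\|x\|$, and in particular $0\in F_k$ for every $k$, so $0\in F$ and $F$ is a non‑empty closed convex set. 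Moreover, as $X$ is reflexive, $f_k$ attains its norm at some $u_k$ with $\|u_k\|=1$ and $f_k(u_k)=1$; then $P_ku_k=u_k-y_k$, whence $\|y_k\|\le\|u_k\|+\|P_ku_k\|\le 2$ for every $k$.

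Next I would fix positive weights $c_k$ with $\sum_{k=1}^{\infty}c_k=1$ and set
$$
Tx=\sum_{k=1}^{\infty}c_kR_kx=x-\sum_{k=1}^{\infty}c_k\phi_k(x)y_k .
$$
Since $\|c_k\phi_k(x)y_k\|\le 2c_k\|x\|$, the series converges absolutely, so $T\colon X\to X$ is well defined, and being a convex combination of the nonexpansive maps $R_k$ it is nonexpansive. If $x\in F$ then $\phi_k(x)=0$ for all $k$, hence $Tx=x$; thus $F\subset Fix(T)$ and $T(F)\subset F$. The crucial point is the reverse inclusion $Fix(T)\subset F$, and this is where the hypothesis $0\notin cl(conv(\{y_k\}))$ enters — it plays here the role that strict convexity played in Theorem \ref{countable}, preventing cancellation among the displacement vectors $-\phi_k(x)y_k$. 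Since $cl(conv(\{y_k\}))$ is a non‑empty closed convex set not containing $0$, Hahn–Banach separation yields $g\in X^{*}$ and $\alpha>0$ with $g(y_k)\ge\alpha$ for all $k$. If $Tx=x$ then $\sum_{k}c_k\phi_k(x)y_k=0$; applying $g$ to this absolutely convergent series gives $\sum_{k}c_k\phi_k(x)g(y_k)=0$ with every summand $\ge\alpha c_k\phi_k(x)\ge 0$, so $\phi_k(x)=0$ for all $k$, i.e. $x\in F$. Hence $Fix(T)=F$.

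Finally, $X$ is reflexive and has (CFPP), $T\colon X\to X$ is nonexpansive, and $Fix(T)=F\neq\emptyset$; so by (\cite{BR1}, Theorem 2 and Lemma 3), applied just as in the proof of Theorem \ref{countable}, the set $F=Fix(T)$ is a nonexpansive retract of $X$, i.e. $F$ is a contractive set. I expect the main obstacle to be the identity $Fix(T)=F$: one has to assemble the single operator $T$ from the half‑space retractions so that it simultaneously "feels" every constraint, and the condition on $\{y_k\}$ is precisely what forces the vanishing of $\sum_k c_k\phi_k(x)y_k$ to imply that every $\phi_k(x)$ vanishes; the uniform bound $\|y_k\|\le 2$, which makes $T$ well defined, is the only place where reflexivity is used before the appeal to \cite{BR1}.
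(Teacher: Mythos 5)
Your proposal is correct and follows essentially the same route as the paper: form the convex combination $T=\sum_k c_k R_k$ of the half-space retractions $R_kx=x-\max\{f_k(x)-d_k,0\}\,y_k$, show $Fix(T)=F$ using the hypothesis $0\notin cl(conv(\{y_k\}))$, and conclude via (CFPP) and the retraction theorem of \cite{BR1}. Your Hahn--Banach separation step and the bound $\|y_k\|\le 2$ just make explicit two points the paper leaves implicit (the normalization needed to place $0$ in the closed convex hull, and the convergence of the defining series).
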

\begin{proof}
We follow the idea included in \cite{BR1}, Lemma 3).
Fix a sequence $ a_n$ of positive numbers such that $ \sum_{n=1}^{\infty} a_n =1.$ By the proof of Theorem \ref{contractive} and Lemma \ref{basicp} the mapping $ Q_n: X \rightarrow F_n$ defined by $ Q_nx = x - f_n(x-z_n)y_n$ if $ f_n(x) > d_n$  and 
$Q_nx =x$ in the opposite case, is a contractive projection from $X$ onto $F_n.$ (Here for each $ n \in \mathbb{N}$ $z_n$ is so chosen that $ f_n(z_n)=d_n.)$  Define $ Q: X \rightarrow X$ by 
$$
Qx = \sum_{n=1}^{\infty} a_n Q_n .
$$
It is easy to see that $Q$ is a nonexpansive mapping. 
We show that $ Fix(Q_n) = F.$ It is easy to see that $ F \subset Fix(Q).$ Assume on the contrary that there exists $ x \in Fix(Q) \setminus F.$ Since $F = \bigcap_{k=1}^{\infty} F_k,$ the set 
$$ 
Z= \{ n \in \mathbb{N}: Q_k(x) \neq x \} \neq \emptyset.
$$
By defintion of $Q$
$$
x = \sum_{k \notin Z}a_k x + \sum_{k \in Z} a_k Q_kx.
$$
(We assume that $\sum_{k \notin Z} = 0$ if $ Z = \mathbb{N}.)$ 
By defintion of $ Q_k,$
$$
0 = \sum_{k \in Z} a_k f_k(x-z_k)y_k.
$$
Since $ a_k >0$ and $ f_k(x-z_k) >0$ for $k \in Z, $ $ 0 \in cl(conv(\{y_k\}_{k \in \mathbb{N}})),$ which is a contradiction.
\end{proof}
\begin{rem}
\label{CFPP}
In \cite{BR1}, Theorem 4, sufficient conditions for a Banach space $X$ satisfying (CFPP) are presented. In particular, any finite-dimensional Banach space satisfies (CFPP).
\end{rem}
\begin{ex}
\label{onecoor}
Let $ X = l_{\infty}^{(n)}.$ Let for $k \in \mathbb{N},$ $ f^k \in S(X^*),$ $ f^k =  (f^k_1,...,f^k_n)$ be so chosen that 
\begin{equation}
\label{dom}
|f^k_j| \geq \sum_{i\neq j} |f^k_i|
\end{equation}
for $ j \in \{1,...,\}$ depending on $k.$   
By \cite{BC}, for any $k,$ $ ker (f^k)$ is one-complemented in $X.$ Fix a sequence of nonegative numbers $\{d_k\}.$ By Theorem \ref{contractive} the half spaces $ F_k = \{ x \in X: f^k(x) \leq d_k\} $ are contractive subsets of $X.$
Assume that $F = \bigcap_{k=1}^{\infty} F_k,$ is a nonempty set . Let for any $ j \in \{1,...,n\},$ $ Z_j = \{ k \in \mathbb{N}:  |f^k_j| \geq \sum_{i\neq j} |f^k_i|\}.$ Assume that for some $ j \in \{ 1,..., n\}$ such that $ Z_j \neq \emptyset, $ 
$ f^k_j > 0$ for any $ k \in Z_j.$ By \cite{BC} and the proof of Theorem \ref{suff}, for each $k \in \mathbb{N},$ $ Q_k x = x - f^k(x-z_k) y_k, $ where $ y_k =(0, (\frac{1}{f^k_j})_j,..., 0)$ where $ j \in \{1,...,n\}$ satisfies (\ref{dom}).
Hence by our assumption on $ Z_j, $  $ 0 \notin cl(conv(\{y_k\}_{k \in \mathbb{N}})).$ By Theorem \ref{suff}, $F$ is a contractive subset of $ X.$ Observe that if for some $ k $ $f^k$ has more than two coordinates different from zero, 
this result cannot be deduced from Theorem \ref{linf}.
\end{ex}
\begin{rem}
 \label{r2}
The assumption that $F$ is a bounded set in Theorem \ref{s} can be weakend. If we assume that $ F = cl(\bigcup_{t \in T} F_t)$ where $\{F_t\}_{t \in T} $ is a directed by $ \subset $ family of bounded and convex existence sets such that
$int(F_{t_o}) \neq \emptyset $ for some $ t_o \in T,$ by Lemma \ref{dir1} the conditions  (a) and (b) from Theorem \ref{s} are equivalent. If $ T = \mathbb{N}$ and $int(F) \neq \emptyset,$  then by 
the Baire Property $int(F_{t_o}) \neq \emptyset $ for some $ t_o \in \mathbb{N}.$
\end{rem}
In particular, we can prove
\begin{thm} 
\label{s1}
Let $(X, \|\cdot\|)$ be a strictly convex, reflexive and separable Banach space and let $F \subset X$ be a bounded set with nonempty interior (compare with Remark \ref{r2}.) Then the following conditions are equivalent  

a. $ F$ is an existence set;

b. $F$ is a contractive set; 

c. $F$ is an intersection of a countable number of contractive half-spaces.

d. $F$ is an optimal set. 
\end{thm}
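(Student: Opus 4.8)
The plan is to deduce Theorem \ref{s1} from Theorem \ref{s} together with the Beauzamy--Maurey result quoted as Theorem \ref{optimal}: the equivalence of (a), (b) and (c) will follow once we check that $(X,\|\cdot\|)$ meets the hypotheses of Theorem \ref{countable}, and the equivalence of (a) and (d) will come essentially for free from reflexivity and strict convexity.

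First I would verify the hypotheses needed to invoke Theorem \ref{s}. Since $F$ is a nonempty, closed, bounded, convex set with nonempty interior in the reflexive, separable space $X$ (we use, as in Theorem \ref{main}, that $F$ is closed and convex), the assumptions of Theorem \ref{main} are satisfied. To apply Theorem \ref{countable} I would use the trivial approximating sequence $\|\cdot\|_n := \|\cdot\|$ with $s_n := 0$ for every $n$: each $\|\cdot\|_n$ is then strictly convex precisely because $(X,\|\cdot\|)$ is, the inequalities $(1-s_n)\|x\|_n \le \|x\| \le (1+s_n)\|x\|_n$ hold trivially for all $x$, and $s_n \to 0$. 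Moreover any set which is contractive with respect to $\|\cdot\|$ is, by definition, contractive with respect to each $\|\cdot\|_n = \|\cdot\|$. Hence $(X,\|\cdot\|)$ satisfies the assumptions of Theorem \ref{countable}, and Theorem \ref{s} yields the equivalence of (a), (b) and (c).

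It remains to bring in (d). That (a) implies (d) was already recorded in the Introduction: every existence set is optimal. For the converse, reflexivity of $X$ gives $X = (X^*)^*$, so in particular $X$ is (trivially, by the identity) one-complemented in $X^{**}$; since $X$ is also strictly convex, Theorem \ref{optimal} applies and shows that every optimal subset of $X$ is an existence set, i.e. (d) implies (a). Combining this with the previous paragraph closes the cycle $\text{(a)}\Leftrightarrow\text{(b)}\Leftrightarrow\text{(c)}\Leftrightarrow\text{(d)}$ and proves the theorem. If one wishes to weaken the boundedness assumption on $F$, Remark \ref{r2} shows how (a) and (b) remain equivalent when $F$ is an increasing countable union of bounded convex existence sets one of which has nonempty interior.

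As for difficulties: there is essentially no new obstacle here, since the statement is a corollary of results already established. The only point worth isolating is the observation that the strict convexity of $X$ itself already supplies everything Theorem \ref{countable} requires of the auxiliary norms, so that the constant sequence $\|\cdot\|_n = \|\cdot\|$ is admissible and no genuine ``renorming'' step is needed -- in contrast to Theorem \ref{l1} and Theorem \ref{linf}, where one must pass to the nearby $\ell_p^{(n)}$ norms in order to exploit one-complementedness of the relevant kernels.
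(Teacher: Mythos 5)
Your overall route coincides with the paper's: both reduce the equivalence of (a), (b), (c) to Theorem \ref{s} and handle (d) via Theorem \ref{optimal}, and your explicit check that the constant sequence $\|\cdot\|_n=\|\cdot\|$, $s_n=0$ satisfies the hypotheses of Theorem \ref{countable} is a useful point that the paper leaves implicit.

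There is, however, one genuine gap. Theorem \ref{s1} does \emph{not} assume that $F$ is convex or closed; it only assumes that $F$ is bounded with nonempty interior. You write ``(we use, as in Theorem \ref{main}, that $F$ is closed and convex)'', i.e.\ you import convexity as an extra hypothesis, but Theorem \ref{s} (through Theorem \ref{main}) genuinely requires it, so the implication (a)$\Rightarrow$(c) is not justified as written. The paper closes exactly this hole using strict convexity: an existence set is optimal by definition, and by the Beauzamy--Maurey result \cite{BM} every optimal subset of a strictly convex space is convex; closedness of an existence set follows from a short direct argument (if $c_n\in F$, $c_n\to x$ and $d\in R_F(x)$, then $\|d-x\|=\lim_n\|d-c_n\|\leq\lim_n\|x-c_n\|=0$, so $x=d\in F$). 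This derivation of convexity from (a) (equivalently from (d)) is the one substantive step of the proof, and it is precisely why strict convexity and clause (d) appear in the statement; your proposal should insert it before invoking Theorem \ref{s}. The rest of your argument --- (a)$\Rightarrow$(d) by definition, and (d)$\Rightarrow$(a) from reflexivity (hence one-complementedness of $X$ in $X^{**}$) together with strict convexity via Theorem \ref{optimal} --- is exactly the paper's.
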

\begin{proof}
If $ F$ is an existence set, then, by definition, $F$ is an optimal set. Since $ X$ is strictly convex, if $F$ is an optimal set, by \cite{BM}, $F$ is a convex set. By Theorem \ref{optimal}, 
$F$ is an existence set. Hence by Theorem \ref{s}, the proof is complete. 
\end{proof}
\begin{rem}
\label{r3}
There exists Banach spaces such that there is no bounded, convex sets with nonempty interior being existence sets. For example, if we take $ X = L_p[0,1]$ for $ 1 < p < +\infty,$ then there is no $ F \subset X,$ $ F$ convex with $ int(F) \neq \emptyset$
being an existence set. Indeed, if such an $F$ exists, then by Theorem \ref{main} $F$ would be an intersection of a countable family of contractive half-spaces $\{D_n\}.$ By Theorem \ref{existence}, each $D_n$ is determined by $f_n \in S(X^*)$ with $ker(f_n)$
being one-complemented. But by \cite{BM}, (see also \cite{FR}, \cite{LS}, \cite{R} and \cite{RO}),each hyperplane in $X$ is not one complemented.
\end{rem}
\section*{Section 3}
Now, we prove some results on existence sets $F$ without assumptions that they have nonempty interior in the whole space $X.$ To the end of this section, if is not otherwise stated, we assume that $int_Z(F) \neq \emptyset $ where $ Z = cl(span(F))$ and $ int_Z(F)$ 
means the interior with respect to $Z.$
\begin{rem}
\label{r4}
It may happen that $int_Z (F) = \emptyset.$ For example, if we take $ X = L_p[0,1]$ for $ 1 \leq p < +\infty,$ and $ F =\{ f \in X: f \geq 0\} $ then it is easy to see that $ Span(F) = X$ and $ int_X(F) = \emptyset.$
Observe that $F$ is a contractive subset of $X.$ Indeed, it is easy to see that the mapping $ Pf = f\chi(D_{f,+}) $ is a contractive projection onto $F.$ Here  $\chi(D_{f,+})$ denote the characteristic function of the set 
$ D_{f,+} = \{ t \in [0,1]: f(t) \geq 0\}.$ If $X$ is finite-dimensional, then $ int_Z(F) \neq \emptyset$ for any $F \subset X,$ $ F \neq \emptyset.$ 
\end{rem}
We start with 
\begin{thm}
\label{int1}
Let $ X$ be a Banach space and  let $F \subset X$ be a convex and bounded (compare with Remark \ref{r2}) existence set. Assume that $ Z = cl(span(F))$ satisfies the assumption of Theorem \ref{s}. If $Z$ is a contractive subset of $X$ then
$F$ is a contractive subset of $X.$
\end{thm}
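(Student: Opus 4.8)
The plan is to compose two contractive selections: one taking $X$ onto $Z = \mathrm{cl}(\mathrm{span}(F))$, and one taking $Z$ onto $F$. First I would invoke the hypothesis that $Z$ is a contractive subset of $X$ to fix a nonexpansive projection $R : X \to Z$ with $R|_Z = \mathrm{id}_Z$. Next, I would apply Theorem \ref{s} within the Banach space $Z$: by assumption $Z$ satisfies the hypotheses of that theorem, and $F$ is a bounded, convex existence set with nonempty interior relative to $Z$ (this is the standing assumption of Section 3, that $\mathrm{int}_Z(F) \neq \emptyset$). Here I must be a little careful: Theorem \ref{s} is stated for existence sets, so I first need to check that $F$, being an existence set in $X$, is also an existence set in $Z$. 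This follows because for $x \in Z$ the set $R_F(x)$ computed in $Z$ coincides with the one computed in $X$ — the defining inequalities involve only points of $F \subset Z$ and the point $x \in Z$, and the norm on $Z$ is the restriction of the norm on $X$. Hence $F$ is an existence set in $Z$, and by Theorem \ref{s} applied in $Z$ there is a contractive projection $S : Z \to F$.

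The map I would then take is $P = S \circ R : X \to F$. For $x, y \in X$ we get $\|Px - Py\| = \|S(Rx) - S(Ry)\| \le \|Rx - Ry\| \le \|x - y\|$, using first that $S$ is nonexpansive on $Z$ and then that $R$ is nonexpansive on $X$. Moreover $P|_F = \mathrm{id}_F$: for $x \in F \subset Z$ we have $Rx = x$ and then $Sx = x$. So $P$ is a contractive projection of $X$ onto $F$, which is exactly the assertion that $F$ is a contractive subset of $X$.

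The only genuinely delicate point is the verification that the hypotheses of Theorem \ref{s} are met for the pair $(Z, F)$: one must confirm that $F$ has nonempty interior in $Z$ (which is the running hypothesis of the section), that $F$ is bounded and convex (given), and — as noted above — that being an existence set in $X$ transfers to being an existence set in $Z$. Once these are in place the composition argument is routine. I should also remark, as in Remark \ref{r2}, that the boundedness hypothesis on $F$ can be relaxed to the situation where $F$ is a closure of a directed union of bounded convex existence sets one of which has nonempty interior in $Z$; the same composition with $R$ then yields the conclusion, since the contractive selection on $Z$ still exists by Lemma \ref{dir1} together with Theorem \ref{s}.
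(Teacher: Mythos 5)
Your proposal is correct and follows exactly the paper's own argument: note that $F$ remains an existence set in $Z$, obtain a contractive projection $Z\rightarrow F$ from Theorem \ref{s}, and compose it with the given contractive projection $X\rightarrow Z$. The extra care you take in checking the hypotheses of Theorem \ref{s} for the pair $(Z,F)$ only makes explicit what the paper leaves implicit.
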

\begin{proof}
Since $F$ is an existence set in $X,$ $F$ is an existence set in $Z.$ By Theorem \ref{s} applied to $Z$ there exists a contractive projection $Q:Z \rightarrow F.$ Let $P: X \rightarrow Z$ be a contractive projection. The $ Q \circ P$ is a contractive projection 
from $X$ anto $F.$ The proof is complete.
\end{proof}
Now we present examples of Banach spaces $X$ in which any subspace being an existence set is a contractive set.
In  \cite{LTr}, the folowing result was shown. 
\begin{thm}
\label{ssmooth}
Let  $X$ be a Banach space and let $ Z
\subset X,$ be a linear subspace, which is an existence set.  Put
\begin{equation}
\label{smooth}
G_Z = \{ z \in Z \setminus \{ 0 \}: \hbox{ there exists exactly
one } f \in S(X^*) : f(z) = \| z \| \} .
\end{equation}
Assume that the norm closure of $ G_Z $ in $X$ is equal to $Z.$
Then there exists exactly one projection $ P \in {\cal P}(X,Z)$
such that $ \| P \| = 1, $ which means that $ Z$ is a contractive subset of $X.$
\end{thm}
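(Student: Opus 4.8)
The plan is not to patch together the local norm-one projections supplied by Lemma~\ref{basicp}, but to construct directly a norm-closed complement of $Z$ in $X$ and take the linear projection along it. I would begin with the elementary observation, valid for any linear subspace $Z$, that
\[
p\in R_Z(x)\iff 0\in R_Z(x-p),
\]
obtained by substituting $c\mapsto c-p$ in the defining inequality. Next I would recast the condition $0\in R_Z(y)$ in a linear form: for $z\in Z$ and $t>0$ both $tz$ and $-tz$ lie in $Z$, so $\|tz\|\le\|y-tz\|$ and $\|tz\|\le\|y+tz\|$; dividing by $t$ and letting $t>0$ vary gives
\[
\|z+sy\|\ge\|z\|\qquad\text{for every }z\in Z\text{ and every }s\in\mathbb{R},
\]
i.e. every vector of $Z$ is Birkhoff--James orthogonal to $y$.

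The step I expect to be the crux is converting this geometric condition into genuine linear constraints, and this is where the hypotheses enter. Fix $z\in G_Z$ and let $f_z\in S(X^*)$ be its unique supporting functional. Since $z$ is a smooth point, the norm is Gateaux differentiable at $z$, so $t\mapsto\|z+ty\|$ is differentiable at $0$ with derivative $f_z(y)$; being convex and, by the inequality $\|z+sy\|\ge\|z\|$, minimal at $0$, it has vanishing derivative there, hence $f_z(y)=0$. Therefore, setting $W=\bigcap_{z\in G_Z}\ker f_z$, a norm-closed subspace, we obtain $x-p\in W$ whenever $p\in R_Z(x)$. The main obstacle is precisely making this transition airtight (in particular invoking the equivalence of smoothness of $z$ with Gateaux differentiability of the norm at $z$); everything else is bookkeeping.

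It remains to assemble the projection. First $Z\cap W=\{0\}$: if $0\neq w\in Z\cap W$, choose $z_n\in G_Z$ with $\|z_n-w\|\to 0$ (possible since $\overline{G_Z}=Z$); then $\|z_n\|=f_{z_n}(z_n)=f_{z_n}(z_n-w)\le\|z_n-w\|\to 0$, contradicting $\|z_n\|\to\|w\|\neq 0$. Since $Z$ is an existence set, every $x\in X$ can be written $x=p+(x-p)$ with $p\in R_Z(x)\subset Z$ and $x-p\in W$, so $X=Z\oplus W$ algebraically. Let $P$ be the linear projection onto $Z$ along $W$. For any $x$ and any $p\in R_Z(x)$, uniqueness of the decomposition forces $Px=p$; in particular $R_Z(x)=\{Px\}$ is a singleton, and taking $c=0\in Z$ in the definition of $R_Z$ gives $\|Px\|\le\|x\|$, so $P$ is a bounded linear projection onto $Z$ with $\|P\|=1$. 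Finally, if $\tilde P\in\mathcal{P}(X,Z)$ satisfies $\|\tilde P\|=1$, then for every $c\in Z$ one has $\|\tilde Px-c\|=\|\tilde P(x-c)\|\le\|x-c\|$, so $\tilde Px\in R_Z(x)=\{Px\}$; hence $\tilde P=P$, which yields uniqueness and completes the argument.
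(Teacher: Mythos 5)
Your argument is correct and complete. Note that the paper itself offers no proof of this statement: it is quoted verbatim from \cite{LTr} (``In \cite{LTr}, the following result was shown''), so there is nothing in the source to compare against line by line. Your construction is the natural one and every step checks out: the reduction $p\in R_Z(x)\Leftrightarrow 0\in R_Z(x-p)$, the reformulation as Birkhoff--James orthogonality of all of $Z$ to $x-p$, and the passage to the linear constraints $f_z(x-p)=0$ for $z\in G_Z$ via the standard equivalence of smoothness at $z$ with Gateaux differentiability of the norm at $z$ (the convex function $t\mapsto\|z+t(x-p)\|$ has a minimum at $0$ and is differentiable there with derivative $f_z(x-p)$, which must therefore vanish). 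The density hypothesis $\overline{G_Z}=Z$ is used exactly where it is needed, namely to get $Z\cap W=\{0\}$ for $W=\bigcap_{z\in G_Z}\ker f_z$, and the existence-set hypothesis gives $X=Z+W$; linearity and the norm bound of the resulting projection, and its uniqueness among norm-one projections, follow as you say. This is essentially the same circle of ideas as in \cite{LTr} (where the kernels of the supporting functionals at smooth points of $Z$ also drive the argument), so I would count it as a correct proof in the spirit of the cited source rather than a genuinely different route.
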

\begin{rem}
\label{r5}
In particular, if $X$ is a smooth space, then (\ref{smooth}) is satisfied.
Applying Theorem \ref{ssmooth} it was shown in \cite{LTr}, that in $c_o$, $l_1$ and some Musielak-Orlicz sequence spaces any subspace $Z$ which is an existence set is one-complemented. Also it was shown in \cite{KL1} that any subspace $Z$ of 
the Lorentz sequence space 
$ l_{1,w} $ which is an existence set, is contractive.   
\end{rem}
\begin{cor}
 Let $X$ be a reflexive Banach space. Let $F \subset X$ be a bounded and convex existence set such that $ dim(Z=span(F)) < \infty .$ If $X$ is strictly convex and smooth then $X$ is contractive. If $X$ is finite-dimensional, strictly convex and  and smooth 
 Banach space then any bounded optimal set is contractive.  
\end{cor}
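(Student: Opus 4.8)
The plan is to derive both assertions from Theorem~\ref{int1} together with Theorem~\ref{ssmooth}; the only point that needs a separate argument is that the finite-dimensional subspace spanned by $F$ is itself an existence set. First I would normalize: choose $v$ in the relative interior of the convex bounded set $F$ and replace $F$ by $F-v$, which is again an existence set by Lemma~\ref{basicp}. Then $Z:=\spn(F)$ is a genuine linear subspace, $\dim Z<\infty$, $Z=cl(\spn(F))$, and $0\in int_Z(F)$. Since translating a contractive set by a fixed vector yields a contractive set (if $P:X\to G$ is a norm-one projection, then $x\mapsto P(x-v)+v$ is one onto $G+v$), it suffices to produce a norm-one projection of $X$ onto this normalized $F$.

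Next I would check that $Z$ is an existence set. The sets $tF$, $t>0$, are existence sets by Lemma~\ref{basicp} and are totally ordered by inclusion, and since $0\in int_Z(F)$ one has $\bigcup_{t>0}tF=Z$; as $Z$ is finite-dimensional it is closed, so $cl\bigl(\bigcup_{t>0}tF\bigr)=Z$ is an existence set by Lemma~\ref{dir1} (this is precisely Corollary~\ref{cone} applied at the interior point $0$). Because $X$ is smooth, every nonzero point of $Z$ has a unique supporting functional in $S(X^*)$, so $G_Z=Z\setminus\{0\}$ is dense in $Z$ and Remark~\ref{r5}/Theorem~\ref{ssmooth} yield a norm-one projection $P:X\to Z$; that is, $Z$ is a contractive subset of $X$.

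Now I would apply Theorem~\ref{int1} with this $Z=cl(\spn(F))$. Its hypotheses hold: $F$ is a convex bounded existence set, $Z$ is contractive in $X$ by the previous step, and $Z$ satisfies the assumptions of Theorem~\ref{s} — being finite-dimensional, $Z$ is reflexive and separable and $F$ is a bounded convex existence set in $Z$ with nonempty interior relative to $Z$ (the hypotheses of Theorem~\ref{main}), while the constant sequence $\|\cdot\|_n=\|\cdot\|$ (strictly convex, since $X$ is) together with $s_n=0$ realizes the hypotheses of Theorem~\ref{countable}. Thus $F$ is a contractive subset of $X$; translating back by $v$ gives the first assertion. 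For the second, let $X$ be finite-dimensional, strictly convex and smooth and let $F$ be a bounded optimal set: by \cite{BM} strict convexity forces $F$ to be convex, and by Theorem~\ref{optimal} (finite dimension gives reflexivity, hence $X$ is one-complemented in $X^{**}$) $F$ is an existence set; since $\dim\spn(F)\le\dim X<\infty$, the first assertion applies and $F$ is contractive.

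The heart of the matter is the second paragraph: recognizing that the closed cone generated by $F$ from an interior point of $Z$ fills out all of $Z$, so that Corollary~\ref{cone} promotes ``$F$ is an existence set'' to ``$Z$ is an existence set'', after which smoothness yields one-complementation of $Z$ via Theorem~\ref{ssmooth}. Once $Z$ is known to be a finite-dimensional one-complemented subspace, the separability and norm-approximation conditions in Theorems~\ref{main}, \ref{countable}, \ref{s} and \ref{int1} are automatic, so no further work is required; the only bookkeeping to watch is the initial translation making $0$ an interior point of $F$ relative to $\spn(F)$ (needed so that $\spn(F)$ is the correct ambient subspace and $F$ has nonempty interior in it).
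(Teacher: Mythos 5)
Your proof is correct and follows essentially the same route as the paper's: Theorem \ref{int1} combined with Theorem \ref{ssmooth}, with finite-dimensionality supplying the nonempty interior of $F$ relative to $Z$ and (for the second assertion) the reduction of optimal sets to convex existence sets via \cite{BM} and Theorem \ref{optimal}. You additionally make explicit a step the paper leaves implicit --- that $Z=\spn(F)$ is itself an existence set, obtained from the cone construction of Corollary \ref{cone} applied at a relative interior point --- which is exactly what is needed before Theorem \ref{ssmooth} can be invoked to conclude that $Z$ is contractive.
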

\begin{proof}
If $ dim(Z=span(F)) < \infty ,$ then $ int_Z(F) \neq \emptyset.$  By Theorem \ref{int1} and Theorem \ref{ssmooth} $F$ is contractive set. If $X$ is strictly convex and reflexive and $F$ is a bounded optimal set then by \cite{BM}, $F$ is a convex existence set.
If $X$ is finite-dimensional, then $ int_Z(F) \neq \emptyset$ for any $ F \subset X.$ The proof is complete. 
\end{proof}
The folowing result can be applied to sets satisfying $int_Z(F) = \emptyset.$ The proof of it is the same as in \cite{KL}.
\begin{thm} 
\label{cosun}
Let $ X$ be a reflexive smooth Banach space and let $F \subset X$ be a convex existence set such that $ F = cl(\bigcup_{t\in T}^{\infty} F_t), $ where $\{F_t\}_{t \in T}$ is a family of convex and compact existence sets ordered by $ \subset. $
Then $F$ is a contractive set. If $F$ is a convex, compact existence set then the assumption of reflexivity can be omitted. 
\end{thm}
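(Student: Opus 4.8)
The plan is to follow the scheme of \cite{KL}, proving the result in two steps: first for a single compact convex existence set, and then extending to the directed union $F=cl(\bigcup_{t\in T}F_t)$ by a compactness argument over a suitable family of nonexpansive maps. Reflexivity of $X$ will be needed only in the second step, which explains why it may be omitted when $F$ itself is compact (one then takes $T$ to be a single index and uses only the first step).

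For a compact convex existence set $F$, I would imitate Lemmas \ref{compact}--\ref{maximal} in the following form. Put
\[
\tilde N(F)=\{\, f:X\to F : \|y-f(x)\|\le\|y-x\|\ \hbox{ for all } y\in F,\ x\in X \,\}.
\]
Since $F$ is an existence set, any selection $x\mapsto g(x)\in R_F(x)$ lies in $\tilde N(F)$, so $\tilde N(F)\neq\emptyset$; and every $f\in\tilde N(F)$ fixes $F$ (take $y=x$ for $x\in F$), hence is a retraction of $X$ onto $F$, and since $f(x)\in F$ it automatically satisfies $f(x)\in R_F(x)$. Because $F$ is norm-compact, $\Pi_{x\in X}F$ is compact in the product of the norm topologies by the Tychonoff theorem, and $\tilde N(F)$ is a closed, hence compact, subset of it; this is where the proof avoids any use of reflexivity. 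Ordering $\tilde N(F)$ as in Lemma \ref{maximal} ($f\le g$ iff $\|f(x)-y\|\le\|g(x)-y\|$ for all $x\in X$, $y\in F$), the sets $\{g:g\le f\}$ are compact, chains have lower bounds, and Kuratowski--Zorn produces a minimal element $r\in\tilde N(F)$; thus $r$ is a retraction of $X$ onto $F$ with $r(x)\in R_F(x)$ for every $x$, and it remains to prove that $r$ is nonexpansive.

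The hard part --- the step I expect to be the main obstacle --- is showing that this minimal $r$ is globally nonexpansive, and this is precisely where the smoothness of $X$ enters, exactly as in \cite{KL}. The argument is by contradiction: if $\|r(a)-r(b)\|>\|a-b\|$ for some $a,b\in X$, one uses the unique supporting functional of $X^*$ at $r(a)-r(b)$ together with the convexity of $F$ to perturb $r$ into a strictly smaller element $r'\in\tilde N(F)$, contradicting minimality; verifying that the perturbed map still belongs to $\tilde N(F)$ --- i.e. that the new value remains a coapproximation with respect to every $c\in F$ --- is the delicate point, and it is there that uniqueness of supporting functionals is essential. Granting this, $r$ is a norm-one (contractive) projection of $X$ onto $F$, so $F$ is a contractive set.

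Finally, for $F=cl(\bigcup_{t\in T}F_t)$ with $\{F_t\}$ directed by $\subset$, I would argue as in Lemmas \ref{dir1}--\ref{dir2}. By the compact case each $F_t$ carries a nonexpansive retraction $P_t:X\to F_t$; fixing $t_o$ and $x_o\in F_{t_o}$ and assuming (as in Lemma \ref{dir1}) that every $F_t$ contains $F_{t_o}$, we get $P_t(x_o)=x_o$ and hence $\|P_t(x)-x_o\|\le\|x-x_o\|$ for all $x$, so all the $P_t$ lie in
\[
\mathcal M=\{\, f:X\to X : f\ \hbox{ nonexpansive},\ \|f(x)-x_o\|\le\|x-x_o\|\ \hbox{ for all } x\in X \,\}.
\]
Reflexivity makes each ball $B(x_o,\|x-x_o\|)$ weakly compact, so, exactly as in Lemma \ref{compact}, $\mathcal M$ is compact in the topology of pointwise weak convergence, nonexpansiveness surviving the limit by weak lower semicontinuity of the norm. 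The sets $cl\{P_s:s\ge t\}$ are closed in $\mathcal M$ and, by directedness of $\{F_t\}$, have the finite intersection property; any $P$ in their intersection is nonexpansive, fixes $\bigcup_tF_t$ (if $y\in F_s$ then $P_t(y)=y$ for $t\ge s$) and hence fixes $F$ by continuity, and maps $X$ into $F$ because $F$ is convex and closed, hence weakly closed by the Mazur theorem, while $P(x)$ is a weak cluster point of $\{P_t(x)\}\subset F$. Thus $P$ is a contractive projection onto $F$, which completes the proof; when $F$ is already compact this last step is vacuous and reflexivity is not used.
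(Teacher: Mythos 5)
Your overall architecture (first handle a single compact convex existence set, then pass to the directed union by a pointwise-weak compactness argument using reflexivity) matches the paper, and your second step is essentially the paper's Lemma \ref{dir1}, i.e. Bruck's Lemma 4 from \cite{BR1}; that part is fine, including the observation that reflexivity is used only there. The problem is the compact case, where you have a genuine gap at exactly the point you flag as ``the hard part.'' Membership of $r$ in your class $\tilde N(F)$ only controls quantities of the form $\|y-r(x)\|$ with $y\in F$, and your partial order compares maps only through such quantities; neither gives any handle on $\|r(a)-r(b)\|$ for $a,b\notin F$ (the best one gets directly is $\|r(a)-r(b)\|\le\|r(a)-b\|$, which does not compare to $\|a-b\|$). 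The claim that a \emph{minimal} element of $\tilde N(F)$ is globally nonexpansive in a smooth space is therefore not a routine perturbation argument ``as in \cite{KL}'' --- it is the entire content of the theorem, and you have not supplied it.

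The paper closes this gap by a different mechanism, which is the one that actually works. By (\cite{KL}, Th.\ 3.3), compactness of $F_t$ yields for each $x$ a coapproximation $P_tx\in R_{F_t}(x)$ which is in addition \emph{sunny}: $\|P_tx-d\|\le\|sx+(1-s)P_tx-d\|$ for all $d\in F_t$ and $s\ge 0$. Smoothness is then applied not to a perturbation of a minimal retraction but to this sun condition: by (\cite{BR2}, Lemma 1) the sunny coapproximation is unique, and by Bruck's theorem (\cite{BR2}, Th.\ 1) uniqueness of the sunny coapproximation forces the selection $x\mapsto P_tx$ to be nonexpansive. The sun condition is the indispensable intermediate object --- it is where compactness of $F_t$ is genuinely used, and it is the only known bridge from smoothness to nonexpansiveness here. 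Your proposal omits it entirely, so the compact case cannot be completed along the lines you sketch; you would need to either establish the sun condition (recovering the paper's proof) or supply a genuinely new argument for the nonexpansiveness of your minimal $r$.
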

\begin{proof}
Let $ x \in X$ and fix $ t \in T.$ By (\cite{KL},Th. 3.3) there exists $ P_tx \in R_{F_t}(x) $ such that for any $ d \in F_t$ and $s \geq 0$  
$$
\| Px - d\| \leq \| sx + (1-s)P_tx -d).
$$
Since $ X$ is smooth, by (\cite{BR2}, Lemma 1), $P_tx$ is uniquely determined. By (\cite{BR2}, Th. 1) $ F_t$ is a contractive set. (Up to now the reflexivity is not needed.) By (\cite{BR1}), Lemma 4, $ F$ is a contractive set. 
\end{proof}
\begin{rem}
\label{problem}
In general, we do not know under which conditions on a Banach space $X$ any existence and convex set can be represented as   $ F = cl(\bigcup_{t\in T}^{\infty} F_t), $ where $\{F_t\}_{t \in T}$ is a family of convex and compact existence sets ordered 
by $ \subset. $
Such a result has been proven \cite{KL}, Lemma 3.7 for reflexive K\"othe sequence spaces. For $l_p$ spaces such a result has been demonstrated in \cite{DE}. 
\end{rem}
\begin{rem}
\label{problem1}
Notice that in general a contractive set need not to be convex. Let, for example $ X = l_{\infty}^{(2)}.$ Define
$$
F = \{(x,y) \in X: |y|\leq |x| \}.
$$
Then after elementary, but tedious calculations, one can see that the mapping $P:X \rightarrow F$ defined by 
$$
P(x,y) = \begin{pmatrix} (x, sgn(y)x) & \text{ for $ x \geq 0, |y|> |x|$}\\
(x, -sgn(y)x) & \text{ for $ x < 0, |y|> |x|$} \\
(x,y) & \text{ for $|y|\leq |x|$} 
\end{pmatrix}
$$
is a contractive projection from $X$ onto $F.$ It would be be interesting to obtain a relation between existence and contractive sets in nonconvex case.  
\end{rem}
Notice that the following two well-known lemmas permit us to adopt the results proved for real Banach spaces in the case of complex Banach spaces. We present the proofs of them for a conveniece of the reader.    
\begin{lem}
\label{complex1}
Let $X$ be a complex Banach space with a Hamel basis $ H = \{h_t\}_{t \in T}.$ Let  $ X_{R}$ be a real linear space spanned by $ \{ h_t , ih_s\}_{s,t \in T}.$ Then $ H_R= \{ h_t , ih_s\}_{s,t \in T}$ is a Hamel basis  of $ X_{R}$ over $ \mathbb{R}.$ 
Let us equipp $X_{R}$ with a norm induced from $X,$ i.e.,
$$
\| \sum_{j=1}^n a_j h_{t_j} +\sum_{j=1}^n b_jih_{t_j} \| = \| \sum_{j=1}^n (a_j +ib_j) h_{t_j} \|
$$
Then the mapping 
$Iz : X \rightarrow X_{R}$ defined by 
$$ 
Iz (\sum_{j=1}^n a_j h_{t_j}) = \sum_{j=1}^n re(a_j)h_{t_j} +\sum_{j=1}^n im(a_j)ih_{t_j}
$$ 
is a linear surjective isometry over $\mathbb{R}$ which means that 
$ \| Iz(x) \| = \| x\|, $ $ Iz(x+y) = Iz(x) + Iz(y),$ $ Iz(ax) = aIz(x)$ for $a \in \mathbb{R}, $ and $ x,y \in X.$  In particular, $ X_{R}$ is a real Banach space. 
\end{lem}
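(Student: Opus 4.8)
The plan is to reduce every assertion to the defining property of a Hamel basis: each $x\in X$ has a unique expansion $x=\sum_{j=1}^{n}a_jh_{t_j}$ with the $t_j$ pairwise distinct and $a_j\in\mathbb C$, up to reordering and omission of zero terms. First I would show that $H_R$ is a real Hamel basis of $X_R$. Spanning is immediate: writing $a_j=re(a_j)+i\,im(a_j)$ turns $x=\sum a_jh_{t_j}$ into $x=\sum re(a_j)h_{t_j}+\sum im(a_j)(ih_{t_j})$, an $\mathbb R$-combination of members of $H_R$; thus $X_R$ in fact coincides with $X$ as a set (the real span of $\{h_t,ih_s\}$ exhausts $X$). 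For $\mathbb R$-linear independence, a relation $\sum\alpha_jh_{t_j}+\sum\beta_j(ih_{t_j})=0$, after collecting the coefficient of each $h_{t_j}$ over a common finite index set, reads $\sum(\alpha_j+i\beta_j)h_{t_j}=0$, whence $\alpha_j+i\beta_j=0$ and so $\alpha_j=\beta_j=0$ for all $j$ by the $\mathbb C$-independence of $H$. (One checks in passing that the listed vectors are pairwise distinct: $h_t\neq ih_t$ because $h_t\neq0$, and $h_s\neq h_t$, $ih_s\neq ih_t$, $h_s\neq ih_t$ for $s\neq t$, again by $\mathbb C$-independence, e.g.\ $h_s=ih_t$ would be a nontrivial dependence $1\cdot h_s+(-i)\cdot h_t=0$.)

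Consequently every $w\in X_R$ has a unique representation $w=\sum a_jh_{t_j}+\sum b_j(ih_{t_j})$ with $a_j,b_j\in\mathbb R$ over a common index set, so the formula defining $\|\cdot\|$ on $X_R$ is unambiguous; moreover its value is literally $\|\sum(a_j+ib_j)h_{t_j}\|$, i.e.\ the original complex norm of that same vector of $X=X_R$, so the ``induced'' norm is just the restriction of $\|\cdot\|_X$. Being such a restriction it is a genuine norm on the real space $X_R$: the triangle inequality and positive-definiteness pass over verbatim, and real absolute homogeneity holds because $|\lambda|$ is the same number whether the scalar $\lambda\in\mathbb R$ is read as real or as complex.

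Next I would verify the claims about $Iz$. It is well defined by uniqueness of the complex expansion $x=\sum a_jh_{t_j}$. Additivity and real homogeneity follow by expanding $x$ and $y$ over a common index set and using $re(a_j+c_j)=re(a_j)+re(c_j)$, $im(a_j+c_j)=im(a_j)+im(c_j)$, and $re(\lambda a_j)=\lambda\,re(a_j)$, $im(\lambda a_j)=\lambda\,im(a_j)$ for $\lambda\in\mathbb R$; note that $Iz$ is only $\mathbb R$-linear — indeed $Iz(ix)\neq iIz(x)$ in general. Surjectivity: an arbitrary $w\in X_R$, written over a common index set as $\sum\alpha_th_t+\sum\beta_t(ih_t)$, equals $Iz\bigl(\sum(\alpha_t+i\beta_t)h_t\bigr)$. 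The isometry identity is then the one-line computation $\|Iz(\sum a_jh_{t_j})\|=\|\sum re(a_j)h_{t_j}+\sum im(a_j)(ih_{t_j})\|=\|\sum(re(a_j)+i\,im(a_j))h_{t_j}\|_X=\|\sum a_jh_{t_j}\|_X$; the same computation in fact shows $Iz$ is the identity once $X_R$ is identified with $X$ as a set. Injectivity is then immediate (from the isometry, or from uniqueness of expansions), so $Iz$ is a bijective $\mathbb R$-linear isometry.

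Finally, that $X_R$ is a real Banach space follows because it is a complete normed real space: since $X_R=X$ as a set with the same norm and $X$ is complete, so is $X_R$; equivalently, the surjective isometry $Iz$ carries Cauchy sequences and their limits from the complete space $X$ over to $X_R$. No step here is a genuine obstacle; the only thing that needs care is the Hamel-basis bookkeeping — always passing to a common finite index set before comparing coefficients, and keeping in mind throughout that $Iz$ and the norm on $X_R$ respect only real, not complex, scalar multiplication.
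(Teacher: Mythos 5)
Your proposal is correct and follows essentially the same route as the paper: deduce the $\mathbb{R}$-independence of $H_R$ from the $\mathbb{C}$-independence of $H$, read the isometry property straight off the definition of the induced norm, and prove surjectivity by explicitly assembling the complex coefficients $a_j+ib_j$ over a common index set. You simply spell out more of the bookkeeping (well-definedness of the norm, the identification $X_R=X$ as sets, completeness) that the paper leaves implicit.
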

\begin{proof}
Since $ H$ is a Hamel basis over $ \mathbb{C}, $  the set  $ \{ h_t , ih_s\}_{s,t \in T}$ is linearly independent over $ \mathbb{R}.$ 
The fact that $ Iz$ is a linear isometry over $\mathbb{R}$ follows immediately from the defintion of the norm in $ X_{R}.$ To prove surjectivity, fix $ x \in X_{R}.$ Then $ x = \sum_{j=1}^n a_j y_{t_j} +\sum_{k=1}^m b_k iy_{s_k}$ with 
$ a_j \neq 0$ and $ b_k \neq 0.$ 
Let $ S = \{ t_1,...,t_n, s_1,..,s_m\}.$ Let $ z = \sum_{j \in S} c_j y_j, $ where $c_j = a_j + ib_j.$ (We put $ a_j =0 $ if $ j \notin \{s_1,...,s_m \}$ and $ b_j =0 $ if $ j \notin \{t_1,...,t_n \}.$ It is clear that $ Iz(z) =x,$ which completes the proof. 
\end{proof}
\begin{lem}
\label{complex2}
Let $ X$ be a complex Banach space. Then $X$ is reflexive if and only if $ X_{R} $ is reflexive, $X$ is strictly convex if and only if $ X_{R}$ is strictly convex, $X$ is smooth if and only if $ X_{R} $ is smooth. Moreover $X$ satisfies (CFPP) if and only if 
$X_R$ satisfies (CFPP).
\end{lem}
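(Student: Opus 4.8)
The plan rests on the observation, made precise in Lemma~\ref{complex1}, that $X_{R}$ is nothing but $X$ regarded as a real linear space, carrying the \emph{same} underlying set, the \emph{same} norm, hence the same metric, the same closed, bounded, convex subsets, and the same nonexpansive self-maps; only scalar multiplication is cut down from $\mathbb{C}$ to $\mathbb{R}$. Because of this, two of the four equivalences are immediate. For \emph{strict convexity}, the defining property ``$\|x\|=\|y\|=1$ and $x\neq y$ imply $\|\tfrac12(x+y)\|<1$'' involves only the norm and real convex combinations, so it holds for $X$ exactly when it holds for $X_{R}$. For \emph{(CFPP)}, every ingredient of Definition~\ref{fixed} --- a nonempty closed bounded convex set, a nonexpansive self-map leaving it invariant, the existence of a fixed point inside it --- is purely metric and convex, hence unaffected by the passage to $X_{R}$; so the assertions ``$X$ has (CFPP)'' and ``$X_{R}$ has (CFPP)'' coincide verbatim.

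For \emph{reflexivity} and \emph{smoothness} I would invoke the classical realification of the dual: the map $f\mapsto\operatorname{Re}f$ is an $\mathbb{R}$-linear surjective isometry of $(X^{*})_{R}$ onto $(X_{R})^{*}$, with inverse $g\mapsto\bigl(x\mapsto g(x)-ig(ix)\bigr)$; in particular $\|\operatorname{Re}f\|=\|f\|$ for $f\in X^{*}$. Iterating this identification once more yields a natural isometric identification $(X_{R})^{**}\cong(X^{**})_{R}$ under which the canonical embedding $J_{X_{R}}:X_{R}\to(X_{R})^{**}$ corresponds to the realification of $J_{X}:X\to X^{**}$; since realification changes neither the underlying set nor the range of a map, $J_{X_{R}}$ is onto precisely when $J_{X}$ is, i.e. $X$ is reflexive iff $X_{R}$ is. (One may also argue directly that the relation $|f(x)|^{2}=(\operatorname{Re}f(x))^{2}+(\operatorname{Re}(if)(x))^{2}$ makes the weak topologies of $X$ and $X_{R}$ coincide on the common underlying set, so that $B_{X}=B_{X_{R}}$ is weakly compact for one iff for the other.)

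For smoothness, I would use the bijection $f\leftrightarrow g=\operatorname{Re}f$ between $S(X^{*})$ and $S((X_{R})^{*})$ and check that, for a fixed $x\in S(X)=S(X_{R})$, it carries the set of supporting functionals of $x$ in $X$ onto the set of supporting functionals of $x$ in $X_{R}$. Indeed, if $f(x)=\|x\|$ then $g(x)=\operatorname{Re}f(x)=\|x\|$; conversely, if $\|f\|\le1$ and $g(x)=\|x\|$, then $\|x\|=\operatorname{Re}f(x)\le|f(x)|\le\|x\|$, which forces $\operatorname{Im}f(x)=0$ and hence $f(x)=\|x\|$. As this restriction of the global bijection is surjective onto the two sets of supporting functionals, $x$ has a unique supporting functional in $X$ iff it has a unique one in $X_{R}$; so $X$ is smooth iff $X_{R}$ is. The only steps demanding an actual (and short) computation are the realification of the dual space and the supporting-functional argument just indicated; I do not expect any genuine obstacle, since, once Lemma~\ref{complex1} is in hand, each of the four properties is either purely metric or reduces to the elementary passage between complex-linear and real-linear functionals.
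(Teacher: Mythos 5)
Your proposal is correct, and most of it coincides with the paper's argument: the heart of both proofs is the dual realification $f\mapsto \mathrm{Re}\,f$, a surjective $\mathbb{R}$-linear isometry of $(X^*)_{R}$ onto $(X_{R})^*$ with inverse $g\mapsto\bigl(x\mapsto g(x)-ig(ix)\bigr)$, and your treatment of strict convexity and (CFPP) (immediate, since only the norm and real convex combinations enter) and of smoothness (matching the two sets of supporting functionals of a fixed $x$ under this bijection) is exactly what the paper does. The one genuine divergence is reflexivity: the paper deduces it from the \emph{James theorem}, observing that every $f\in S(X^*)$ attains its norm if and only if every $g\in S((X_{R})^*)$ does, whereas you identify $(X_{R})^{**}$ with $(X^{**})_{R}$ and compare the canonical embeddings, or alternatively note that the weak topologies of $X$ and $X_{R}$ coincide and invoke weak compactness of the common unit ball. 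Your route avoids the deep James theorem at the cost of one round of bookkeeping: if you use the bidual identification you should verify explicitly that under the composite isometry $(X_{R})^{**}\cong((X^*)_{R})^*\cong(X^{**})_{R}$ the embedding $J_{X_{R}}$ goes over to the realification of $J_{X}$, which is the one-line computation $J_{X_{R}}(x)(\mathrm{Re}\,f)=\mathrm{Re}\,f(x)=\mathrm{Re}\bigl(J_{X}(x)(f)\bigr)$. The weak-topology variant, resting on the identity $|f(x)|^{2}=(\mathrm{Re}\,f(x))^{2}+(\mathrm{Re}(if)(x))^{2}$, is equally valid and arguably the cleanest of the three. Either way, no step fails; the difference is only in which classical characterization of reflexivity is invoked.
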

\begin{proof}
The proofs of all above properties follows from Lemma \ref{complex1}. Before presenting them, fix $ f \in X^*. $ Then $ f = re(f) + i (im(f)). $ Put $ g = re(f).$ Since $ f(ix) = if(x)$ for any $ x \in X, $ we easily get that 
$ f(x) = g(x) - ig(ix).$ Moreover, $ \|f \| = \|g\|.$ Indeed, it is immediate that $ \| g\| \leq \|f\|.$ To prove a converse, fix $ x \in X,$ $\|x \| =1. $ Then $ f(e^{it}x) = g(e^{it}x)$ for some $ t \in [0,2\pi].$ 
Hence 
$$
|f(x)| = |f(e^{it}x)| =|g(e^{it}x)| \leq \|g\| \|e^{it}x\| = \|g\|
$$
which shows our claim. Moreover, by the above reasoning if $g \in (X_{R})^*$ then the mapping $f:X \rightarrow \mathbb{C}$ given by $ f(x) = g(x) - ig(ix) $ belongs to $ X^*$ and $ \|f\| = \|g\|.$ 
Hence, in particular, any $ f \in S(X^*)$ attains its norm at some $x\in S(X)$ if and only if any $g \in (X_R)^*$ attains its norm in some $ z \in S(X_R).$ Consequently, by the James Theorem $X$ is reflexive if and only if $X_R$ is reflexive.
Analogously, for any $x \in S(X)$ there exists exactly one $f \in S(X^*) $ satisfying $ f(x) = \|x\|=1$ if and only if  for any $z \in S(X_R)$ there exists exactly one $g \in S((X_R)^*) $ satisfying $ g(z) = \|z\|=1.$
Hence $X$ is smooth if and only if $X_R$ is smooth. The equivalence of strict convexity for $X$ and $X_R$ and the equivalence of (CFPP) for $X$ and $X_R$ follows immediately from Lemma \ref{complex1}, so we omit the proofs. 
\end{proof}
\begin{thm}
\label{complex3}
Theorem \ref{main}, Theorem \ref{countable}, Theorem \ref{s} (By a half-space determined by $ f \in S(X^*)$ we understand the set $\{ x \in X: re(f)(x) \leq d\}.),$ Theorem \ref{l1}, Theorem \ref{linf}, Theorem \ref{suff}, Theorem \ref{s1},
Theorem \ref{int1}, Theorem \ref{ssmooth} and Theorem \ref{cosun} holds true for any complex Banach space $X$.
\end{thm}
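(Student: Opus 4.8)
The plan is to reduce everything to the real case already established, using the (isometric copy of the) realification $X_R$ of $X$ together with the $\mathbb{R}$-linear surjective isometry $Iz:X\to X_R$ of Lemma \ref{complex1}, and the transfer of structural properties recorded in Lemma \ref{complex2}.

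First I would observe that, since $Iz$ is a surjective $\mathbb{R}$-linear isometry, it maps convex sets to convex sets, closed sets to closed sets, bounded sets to bounded sets, linear subspaces to linear subspaces, and sets with nonempty ($Z$-)interior to sets with nonempty ($Iz(Z)$-)interior. Moreover, because the set $R_F(x)$ in the definition of an existence set, the defining inequality $\|y-f(x)\|\le\|y-x\|$ of a nonexpansive selection, and the relation $P|_F=\mathrm{id}_F$ of a projection all involve only the norm and the real-affine structure, one has $R_{Iz(F)}(Iz(x))=Iz(R_F(x))$, and $F$ is an existence set (respectively a contractive set, an optimal set) in $X$ if and only if $Iz(F)$ is one in $X_R$. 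By Lemma \ref{complex2}, $X_R$ is reflexive, strictly convex, smooth, or satisfies (CFPP) exactly when $X$ is, and separability is preserved because $Iz$ is an isometry.

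Next I would set up the dictionary for half-spaces and functionals. A real half-space in $X_R$ is $H=\{z\in X_R:g(z)\le d\}$ with $g\in (X_R)^*$; then $Iz^{-1}(H)=\{x\in X:(g\circ Iz)(x)\le d\}$, and $g\circ Iz$ is a bounded $\mathbb{R}$-linear functional on $X$ with $\|g\circ Iz\|=\|g\|$. As in the proof of Lemma \ref{complex2}, $g\circ Iz=\mathrm{re}(f)$ for the unique $f\in X^*$ given by $f(x)=(g\circ Iz)(x)-i(g\circ Iz)(ix)$, with $\|f\|=\|g\|$. Hence real half-spaces in $X_R$ correspond, under $Iz^{-1}$, precisely to the complex half-spaces $\{x\in X:\mathrm{re}(f)(x)\le d\}$ of the statement; such a half-space is contractive in $X$ iff the corresponding real half-space is contractive in $X_R$, and the relevant one-complemented subspace is $\ker(\mathrm{re}(f))$ (a real hyperplane), which is exactly the object to which Theorem \ref{existence}, applied in $X_R$, produces a norm-one projection. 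With these observations in hand, each of Theorems \ref{main}, \ref{countable}, \ref{s}, \ref{suff}, \ref{s1}, \ref{int1}, \ref{ssmooth} and \ref{cosun} follows from its real counterpart applied to $X_R$ and $Iz(F)$: one checks via the two preceding paragraphs that the hypotheses pass to $X_R$ (for Theorem \ref{countable}, a sequence of complex norms on $X$ induces, via the Hamel-basis construction of Lemma \ref{complex1}, a sequence of real norms on the fixed underlying real vector space of $X_R$, strictly convex by Lemma \ref{complex2} and satisfying the same inequalities $(\ref{equiv})$), applies the real theorem, and pulls the conclusion back through $Iz^{-1}$, translating real half-spaces into complex ones as above. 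For Theorems \ref{l1} and \ref{linf} one argues in the same way, additionally invoking the complex-scalar versions of the structural facts of \cite{BC}, \cite{BO}, \cite{BP} on one-complemented hyperplanes in $l_1^{(n)}$, $l_\infty^{(n)}$ and $l_p^{(n)}$, together with the convergence of the complex $l_{p_l}^{(n)}$-norms to the $l_1^{(n)}$- (respectively $l_\infty^{(n)}$-) norm; the real proofs then go through verbatim.

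The bulk of the work is bookkeeping, and I expect the only delicate point to be the half-space dictionary: one must make sure that a ``half-space determined by $f\in S(X^*)$'' in the complex statement, namely $\{x:\mathrm{re}(f)(x)\le d\}$, is exactly the $Iz^{-1}$-image of a real half-space, so that contractivity and one-complementedness transfer correctly --- in particular that it is the real hyperplane $\ker(\mathrm{re}(f))$, not the complex subspace $\ker(f)$ of real codimension two, that enters. A secondary point is verifying that the finite-dimensional complementation results used for Theorems \ref{l1} and \ref{linf} are indeed available over $\mathbb{C}$.
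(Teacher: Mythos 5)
Your proposal is correct and follows essentially the same route as the paper: reduce to the real case via the realification $X_R$ and the surjective $\mathbb{R}$-linear isometry $Iz$ of Lemma \ref{complex1}, transfer reflexivity, strict convexity, smoothness and (CFPP) by Lemma \ref{complex2}, and re-run the real proofs. Your write-up is in fact more explicit than the paper's (which disposes of the matter in three sentences), particularly in spelling out the half-space dictionary identifying $\{x: \mathrm{re}(f)(x)\le d\}$ with a real half-space and $\ker(\mathrm{re}(f))$ as the relevant real hyperplane.
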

\begin{proof}
Let $ X$ be a complex Banach space and let $ F \subset X.$ Then by Lemma \ref{complex1}, $F$ is a convex set if and only if $ Iz(F)$ is a convex set,  $F$ is an existence set in $X$ if and only if $ Iz(F)$ is a and existence set in $X_R$ and 
$F$ is a contractive subset of $X$ if and only if $ Iz(F)$ is a contractive subset of $X_R.$ By Lemma \ref{complex2}, we can adopt the proofs of the above mentionned theorems given in the real case to the complex case. 
\end{proof}

$\begin{array}{l}
\textnormal{\small Maciej CIESIELSKI}\\
\textnormal{\small Institute of Mathematics}\\
\textnormal{\small Pozna\'{n} University of Technology}\\ 
\textnormal{\small Piotrowo 3A, 60-965 Pozna\'{n}, Poland}\\ 
\textnormal{\small email: maciej.ciesielski@put.poznan.pl;}\\
\textnormal{ }
\end{array}$

$\begin{array}{l}
\textnormal{\small Grzegorz LEWICKI}\\
\textnormal{\small Department of Mathematics and Computer Science}\\ \textnormal{\small Jagiellonian University}\\
\textnormal{\small 30-348 Krak\'ow, \L ojasiewicza 6, Poland}\\
\textnormal{\small email: grzegorz.lewicki@im.uj.edu.pl;}
\end{array}$


\begin{thebibliography}{99}
\bibitem{BP} M. Baronti and P. L. Papini, {\it Norm-one projections onto subspaces of $l_p,$} Ann. Mat. Pura Appl. {\bf 4, 152} (1988), 53 -61.

\bibitem{B}
B. Beauzamy, {\it Projections contractantes dans les espaces de
Banach,} Bull. Sci. Math. (2) {\bf 102} (1978), 43 -- 47.

\bibitem{BE} B. Beauzamy and P. Enflo, {\it Th\'eor\`emes de point fixe et
d'approximation,} Ark. Mat. {\bf 23,1} (1985), 19 -- 34.

\bibitem{BC} J. Blatter and E. W. Cheney, {\it Minimal projections
onto hyperplanes in sequence spaces,} Ann. Mat. Pura ed Appl. {\bf
101,} (1974), 215 - 227.

\bibitem{BM} B. Beauzamy and B. Maurey, {\it Points minimaux et ensembles
optimaux dans les espaces de Banach,} J. Funct. Anal. {\bf 24}
(1977), 107 -- 139.

\bibitem{Be} H. Berens, {\it \"Uber die beste Approximation in $\mathbb{R}^n,$
} Arch. Math. {\bf 39} (1982) 376 -- 382.

\bibitem{BO} F. Bohnenblust, {\it Subspaces of $l_{p,n}$ spaces,} Amer. J. Math., {\bf 63'} (1941), 64 - 72. 

\bibitem{BR1} R. E. Bruck Jr., {\it Properties of fixed-point sets of
nonexpansive mappings in Banach spaces,} Trans. Amer. Math. Soc.
{\bf 179} (1973) 251 -- 262.

\bibitem{BR2} R. E. Bruck Jr., {\it Nonexpansive projections on subsets of
Banach spaces,} Pacific J. Math. {\bf 47,2} (1973) 341 -- 355.

\bibitem{DE} V. Davis, P. Enflo, {\it Contractive projections on
$l_p$-spaces,} London Math. Soc. Lecture Notes Series {\bf 137}
(1989) 151 -- 161.

\bibitem{E1} P. Enflo, {\it Contractive projections onto subsets of
$L^1(0,1), $} London Math. Soc. Lecture Notes Series, {\bf 137}
(1989) 162 -- 184.

\bibitem{E2} P. Enflo, {\it Contractive projections onto subsets of $
L^p$-spaces,} in: Lecture Notes in Pure and Applied Mathematics,
Function Spaces, {\bf 136}, 79 -- 94, New York, Basel, Marcel
Dekker Inc., 1992.

\bibitem{FK} D.G. de Figueiredo and L. A. Karlovitz, {\it On the
extensions of contractions of normed spaces, } in: Nonlinear
Functional Analysis, Proceedings of Symposia in Pure Mathematics
{\bf 18,1} 95 -- 104, Providence 1970, Amer. Math. Soc..

\bibitem{FR} C. Franchetti, {\it The norm of minimal projection onto hyperplanes in $L_p[0,1]$ and the radial constant,} Boll. Un. Mat. Ital. B {\bf 7,4}(1990), 803 -821.

\bibitem{G} P. M. Gruber, {\it Fixpunktmengen von Kontraktionen in
endlichdimensionalen normierten R\"aumen,} Geom. Dedicata {\bf 4}
(1975), 179 -- 198.

\bibitem{H} L. Hetzelt, {\it On suns and cosuns in finite dimensional
normed real vector spaces,} Math. Hungar. {\bf 45} (1985) 53 --
68.

\bibitem{JKL} J. Jamison, A. Kami\'nska
and G. Lewicki, {\it One-complemented subspaces of Musielak-Orlicz
sequence spaces}, Journ. Approx. Theory, {\bf 130} (2004) 1 -- 37.

\bibitem{K} A. Kami\'nska, {\it Rotundity of Orlicz-Musielak sequence
spaces}, Bull. Acad. Polon. Sci. Ser. Sci. Math. {\bf 29} (1981),
no. 3-4, 137--144.

\bibitem{KL} A. Kami\'nska, G. Lewicki, {\it Contractive and optimal sets
in modular spaces,} Math. Nachr., {\bf 268} (2004) 74 -- 95.

\bibitem{KL1}  A. Kami\'nska, G. Lewicki, {\it Extreme and smooth points in Lorentz and Maricinkiewicz Spaces with applications to contracttive projections,}
Rocky Mountain Journal of Mathematics, {\bf 39,5} (2009) 1533 - 1572.

\bibitem{LS} G. Lewicki and L. Skrzypek, {\it Minimal projections onto hyperplanes in $ l_p^n,$} Journ. Approx. Theory, {\bf 202,} (20160, 42 - 63. 

\bibitem{LTr} G. Lewicki, G. Trombetta, {\it Optimal and one-complemented subspaces,} Monatsh. Math., {\bf 153,} (2008), 115 - 132. 

\bibitem{LT} J. Lindenstrauss and L. Tzafriri, {\it Classical Banach
Spaces I}, Springer-Verlag, 1977.

\bibitem{L} J. Lindenstrauss, {\it On projections with norm 1 - an
example,} Proc. Amer. Math. Soc. {\bf 15} (1964) 403 -- 406.

\bibitem{M} J. Musielak,
{\it Orlicz Spaces and Modular Spaces,} Lecture Notes in Math.
{\bf 1034}, Springer, 1983.

\bibitem{N} H. Nakano, {\it Modulared Semi-Ordered Linear Spaces},
Maruzen Co., Ltd., Tokyo, 1950.

\bibitem{PS} P. L. Papini and I. Singer, {\it Best co-approximation in
normed linear spaces,} Monatsh. Math. {\bf 88} (1979) 27 -- 44.

\bibitem{R} B. Randrianantoanina, {\it Norm one projections in Banach
spaces,} Taiwanese J. Math. {\bf 5} (2001) 35 -- 95. 

\bibitem{Ra1} Rao, T. S.S.R.K  {\it On ideals and generalized centers of finite sets in Banach spaces,} Journal of Mathematical Analysis and applications, (2013) {\bf 398,2}, 886 - 888.

\bibitem{Ra2} Rao, T. S.S.R.K  {\it On intersections of ranges of projections of norm one in Banach spaces,} Proc. Amer. Math. Soc., (2013), {\bf 141, 10}, 3579 - 3586.

\bibitem{Ra3} Rao, T. S.S.R.K  {\it Existence sets of best coapproximation and projections of norm one,} Monatsh. Math., (2015), {\bf 176,4}, 607 - 614.

\bibitem{Ra4} Rao, T. S.S.R.K  {\it Coproximinality for quotient spaces,} Zeitschrift f\"ur Analysis und Ihre Anwendungen, (2017), {\bf 36,2}, 151 - 157.

\bibitem{Ra5} Rao, T. S.S.R.K  {\it Into isometries of Banach spaces,} Recent Trends in Operator Theory and Applications, Book Series: Contemporary Mathematics, (2019), {\bf 137}, 135 - 144

\bibitem{Re} S.Reich, {\it Product formulas, nonlinear semigroups and accretive
operators,} Journal of Functional Analysis {\bf 36} (1980) 147 -
168.

 \bibitem{RO} S. Rolewicz, {\it On projections on subspaces of codimension one,} Studia Math., {\bf 96,1,} (1990), 17 -19.
 
\bibitem{W} U. Westphal, {\it Cosuns in $l^p(n)$, } J. Approx. Theory
{\bf 54} (1988) 287 -- 305.
\end{thebibliography}
\end{document}